\newtheorem{theorem}{Theorem}[section]
\newtheorem{lemma}[theorem]{Lemma}
\newtheorem{corollary}[theorem]{Corollary}
\newtheorem{definition}[theorem]{Definition}
\newtheorem{remark}[theorem]{Remark}
\newtheorem{proposition}[theorem]{Proposition}
\numberwithin{equation}{section}
\newenvironment{proof}[1][Proof]{\noindent\textbf{#1.} }{\hfill $\Box$}
 \makeatletter\setlength{\textwidth}{15.0cm}
\begin{document}

\title{{Dynamics of a Nonlocal Dispersal SIS Epidemic Model}}
\author{Fei-Ying Yang and Wan-Tong Li\thanks{%
Corresponding author (wtli@lzu.edu.cn).} \\
School of Mathematics and Statistics,\\
Lanzhou University, Lanzhou, Gansu, 730000, P.R. China}
%
\maketitle

\begin{abstract}

This paper is concerned with a nonlocal dispersal susceptible-infected-susceptible (SIS) epidemic model with Dirichlet boundary condition, where the rates of disease transmission and recovery are assumed to be spatially heterogeneous. We introduce a basic reproduction number $R_0$ and establish threshold-type results on the global dynamic in terms of $R_0$. More specifically, we show that if the basic reproduction number is less than one, then the disease will be extinct, and if the basic reproduction number is larger than one, then the disease will persist. Particularly, our results imply that the nonlocal dispersal of the infected individuals may suppress the spread of the disease even though in a high-risk domain.

\textbf{Keywords}: Nonlocal dispersal, Epidemic model, Basic reproduction number, Disease-free equilibrium, Endemic equilibrium.

\textbf{AMS Subject Classification (2010)}: 35B40, 45A05, 45F05, 47G20.

\end{abstract}


\section{Introduction}\label{Int}

\noindent

The susceptible-infected-susceptible (SIS) model is one of the basic compartment models to describe the transmission of communicable diseases in the filed of theoretical epidemiology. In the past years, an SIS epidemic reaction-diffusion model with Neumann boundary condition
\begin{equation*}\label{1101}
\begin{cases}
\frac{\partial S}{\partial t}=d_S\Delta S-\frac{\beta(x)SI}{S+I}
+\gamma(x)I ~&\text{in}~\Omega\times(0,+\infty),\\
\frac{\partial I}{\partial t}=d_I\Delta I+\frac{\beta(x)SI}{S+I}
-\gamma(x)I ~&\text{in}~\Omega\times(0,+\infty),\\
\partial_{\nu}S=\partial_{\nu}I=0 ~&\text{on}
~\partial\Omega\times(0,+\infty)
\end{cases}
\end{equation*}
has been proposed and studied by Allen et al. \cite{ABL-2008}.
Here, the functions $S(x,t)$ and $I(x,t)$ denote the density of susceptible and infected individuals at location $x$ and time $t$, respectively; $d_S$ and $d_I$ are positive diffusion rates for the susceptible and infected individuals; $\beta(x)$ and $\gamma(x)$ are positive H\"{o}lder continuous functions on $\bar{\Omega}$ that represent the rates of disease transmission and recovery at $x$. Meanwhile, the homogeneous Neumann boundary conditions mean that there is no population flux across the boundary $\partial\Omega$ and both the susceptible and infected individuals live in the self-contained environment. In \cite{ABL-2008}, Allen et al. mainly considered the impact of spatial heterogeneity of environment and movement of individuals on the persistence and extinction of a disease. After that, Peng and Liu \cite{Peng2009} further extended the results in \cite{ABL-2008}  and proved that the endemic equilibrium is globally asymptotically stable if it exists in some particular cases, which confirmed the conjecture proposed in \cite{ABL-2008}. Meanwhile, Peng \cite{Peng20091} provided a further understanding of the impacts of diffusion rates of the susceptible and infected population on the persistence and extinction of the disease. For more results of spatial heterogeneity of environment about the SIS epidemic models, one can see \cite{Peng2012,Peng2013} and the references therein.

In 2010, Huang et al. \cite{Huang2010} studied a diffusive SIS epidemic model with hostile boundary (Dirichlet) condition of the form
\begin{equation}\label{1102}
\begin{cases}
\frac{\partial S}{\partial t}=d_S\Delta S+\Lambda(x)-\frac{\beta(x)SI}{S+I}
+\gamma(x)I ~&\text{in}~\Omega\times(0,+\infty),\\
\frac{\partial I}{\partial t}=d_I\Delta I+\frac{\beta(x)SI}{S+I}
-\gamma(x)I ~&\text{in}~\Omega\times(0,+\infty),\\
S(x,t)=I(x,t)=0 ~&\text{on}
~\partial\Omega\times(0,+\infty).
\end{cases}
\end{equation}
In \eqref{1102}, the growth term $\Lambda(x)$ is used to balance the population decay in the boundary and makes the model more meaningful. They majored in considering the global stability of the disease-free equilibrium, the existence and uniqueness of a positive endemic steady, and also the global stability of the endemic steady for some particular cases.

It is well-known that the random dispersal which describes the local behavior of the movement to species, has been used to construct the population model in epidemiology and spatial ecology. However, Murray  \cite{Murray} pointed out that a local or short range diffusive flux proportional to the gradient is not sufficiently accurate to describe some biological phenomenon. In particular, Murray \cite[Chapter 17]{Murray} further emphasized the importance and intuitively necessity of the long range effects in the biological areas. Nowadays, the diffusion process is also described by an integral operators which represents the movement of species between nonadjacent spatial locations, such as $J*u-u=\int_{\mathbb{R}}J(x-y)u(y)dy-u$ in epidemiology and spatial ecology, see \cite{RN2012,Wang2002,HMMV2003,KLSH2010,KLSH2011,
YYLW2013,zhang-mcm,SLY2014,YLW2015,SLW2010,SLW2011,PLL2009} and the references therein.

In the present paper, we are concerned with the nonlocal counterpart of \eqref{1102} as follows
\begin{equation}\label{101}
\begin{cases}
\frac{\partial S}{\partial t}=d_S\int_{\mathbb{R}^N}J(x-y)(S(y,t)-S(x,t))dy
+\Lambda(x)-\frac{\beta(x)SI}{S+I}+\gamma(x)I,& x\in\Omega,~t>0,\\
\frac{\partial I}{\partial t}=d_I\int_{\mathbb{R}^N}J(x-y)(I(y,t)-I(x,t))dy
+\frac{\beta(x)SI}{S+I}-\gamma(x)I, & x\in\Omega,~t>0,\\
S(x,0)=S_0(x),~I(x,0)=I_0(x), & x\in\Omega,\\
S(x,t)=I(x,t)=0,& x\in{\mathbb{R}^N}\backslash\Omega,~t>0.\\
\end{cases}
\end{equation}
Here, $\Omega$ is a bounded domain, $\Lambda(x)$ is positive and continuous function on $\bar{\Omega}$ which represents the growth rate for the new born. $\beta(x)$ and $\gamma(x)$ are all positive continuous functions on $\bar{\Omega}$. Just as to say in \cite{ABL-2008} that the term $\frac{SI}{S+I}$ is a Lipschitz continuous function of $S$ and $I$ in the open first quadrant, we can extend its definition to the entire first quadrant by defining it to be zero when either $S=0$ or $I=0$. Throughout the whole paper, we always assume that the initial infected individuals are positive without other description, that is
\begin{equation}\label{102}
\int_{\Omega}I_0(x)dx>0,~\text{with}~S_0(x)\ge0~\text{and}
~I_0(x)\ge0~\text{for}~x\in\Omega,
\end{equation}
and the dispersal kernel function satisfies
\begin{equation}\label{103}
J(x)\in C(\bar{\Omega}),~J(0)>0,~J(x)=J(-x)\ge0,~\int_{\mathbb{R}^N}J(x)dx=1
~\text{and}~\int_{\Omega}J(x-y)dy\not\equiv1.
\end{equation}
Now, by the standard theory of semigroups of linear bounded operator \cite{Pazy}, we know from \cite{KLSH2010} that \eqref{101} admits a unique positive solution $(\tilde{S}(x,t),\tilde{I}(x,t))$, which is continuous with respect to $x$ and $t$.

On the other hand, we know that the basic reproduction number $R_0$ is an important threshold parameter to discuss the dynamic behavior of the epidemic model. For \eqref{1102}, $R_0$ can be defined by a variational formulation, that is
\begin{equation}\label{105}
R_0=\sup\left\{\frac{\int_{\Omega}\beta(x)\phi^2(x)dx}
{\int_{\Omega}[d_I|\nabla \phi(x)|^2+\gamma(x)\phi^2(x)]dx}:~\phi\in H_0^1(\Omega),~\phi\neq0\right\},
\end{equation}
and this definition strongly depends on the principal eigenvalue of the operator $d_I\Delta\cdot+\beta(x)-\gamma(x)$. Note that the nonlocal dispersal operator shares many similar properties with the random diffusion operator (see, e.g.,  \cite{AMRT2010,KLSH2010}). It is natural to ask whether the basic reproduction number can be defined by the same method as \eqref{105}. However, the answer is \textbf{negative}. The main reason is that the nonlocal operator
\begin{equation}\label{100}
L[u](x):=d_I\left(\int_{\Omega}J(x-y)u(y)dy-u(x)\right)
+\beta(x)u-\gamma(x)u
\end{equation}
does not admit principal eigenvalues in general, see \cite{HMMV2003,Coville2010,SHZH2010} and so on. To overcome this difficulty, we may define the basic reproduction number $R_0$ of system \eqref{101} by the abstract theory \cite{Thieme2009,WZ2012}. Particularly, we give the relations between $R_0$ and
\begin{equation}\label{110}
\mu_p(d_I):=\sup \left\{G(d_I)\Bigg|~u\in L^2(\Omega),~\int_{\Omega}u^2(x)dx=1 \right\},
\end{equation}
in which
$$G(d_I)=d_I\left(\int_{\Omega}\int_{\Omega}
J(x-y)\varphi(y)\varphi(x)dydx-\int_{\Omega}\varphi^2(x)dx
\right)+\int_{\Omega}(\beta(x)-\gamma(x))
\varphi^2(x)dx.$$
It follows from \cite{Coville2010,HMMV2003,SLY2014,SHZH2010,SLW2015} that $\mu_p(d_I)$ may not be the principal eigenvalue of the operator $L$. This is of independent interest.

Under the assumptions \eqref{102} and \eqref{103}, we can prove that the stationary problem of \eqref{101} admits a unique disease-free equilibrium and a unique endemic equilibrium. Further, the global stability of the disease-free equilibrium is discussed if $R_0<1$. Meanwhile, when $R_0>1$, and the susceptible and infected individuals have the same diffusion rate, then the global stability of the endemic equilibrium is also obtained. In particular, we have to emphasize that a lack of regularizing brings some technical difficulties to obtain our results. However, we can overcome by some auxiliary problems.

This paper is organized as follows. In Section 2, we give the definition of the basic production number of system \eqref{101}. Then, the existence, uniqueness and global attractivity of the disease-free equilibrium are obtained when $R_0<1$ in Section 3. In Section 4, we show the existence, uniqueness and global attractivity of the endemic equilibrium as $R_0>1$. Finally, a brief discussion is given to explain our results in the biological sense in Section 5.

\section{The basic reproduction number}
\noindent

In this section, we are concerned with the basic reproduction number of system \eqref{101}, which is an important threshold parameter in population models. Let $X=C(\bar{\Omega})$ be the Banach space of real continuous functions on $\bar{\Omega}$. Throughout this section, $X$ is considered as an ordered Banach space with a positive cone
$X_+=\{u\in X|~u\ge0\}$. It is well known that $X_+$ is generating, normal and has nonempty interior. Additionally, an operator $T: X\to X$ is called positive if $TX_+\subseteq X_+$.

\begin{definition}
A closed operator $\mathscr{A}$ in $X$ is called resolvent-positive, if the resolvent set of $\mathscr{A}$, $\rho(\mathscr{A})$, contains a ray $(\omega,\infty)$ and $(\lambda-\mathscr{A})^{-1}$ is a positive operator for all $\lambda>\omega$.
\end{definition}

We also define the spectral bound of $\mathscr{A}$ as follows
$$S(\mathscr{A})=\sup\{Re\lambda|~\lambda\in\sigma(\mathscr{A})\},$$
where $\sigma(\mathscr{A})$ denotes the spectrum of $\mathscr{A}$. The spectral radius of $\mathscr{A}$ is defined as
$$r(\mathscr{A})=\sup\{|\lambda|; ~\lambda\in\sigma(\mathscr{A})\}.$$
Below, we list some results from \cite{Thieme2009}.
\begin{theorem}\label{theorem203}
Let $\mathscr{A}$ be the generator of a $C_0-$semigroup $S$ on the ordered Banach space $X$ with a normal and generating cone $X_+$. Then, $\mathscr{A}$ is a resolvent-positive if and only if $S$ is a positive semigroup, i.e., $S(t)X_+\subset X_+$ for all $t\ge0$. If $\mathscr{A}$ is resolvent-positive, then
\begin{equation*}
(\lambda-\mathscr{A})^{-1}x=\lim_{b\to\infty}\int_{0}^{b}e^{\lambda t}S(t)xdt,~\lambda>S(\mathscr{A}),~x\in X.
\end{equation*}
\end{theorem}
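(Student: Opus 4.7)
The plan is to establish the equivalence by proving the two implications via the duality between the $C_0$-semigroup $S$ and its generator $\mathscr{A}$ (exponential formula in one direction, Laplace-transform representation in the other), and then to extract the displayed integral representation of the resolvent from the same analysis. Throughout I will use that normality of $X_+$ forces $X_+$ to be closed, so positivity is preserved under pointwise Bochner integrals and strong limits.

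For the implication that a positive semigroup yields a resolvent-positive generator, I would invoke the standard Bochner-Laplace identity $(\lambda-\mathscr{A})^{-1}x=\int_0^\infty e^{-\lambda t}S(t)x\,dt$, valid for every real $\lambda$ strictly larger than the exponential growth bound $\omega_0(S)$. This is proved by showing that the right-hand side defines a bounded operator satisfying the defining algebraic relations with $\lambda-\mathscr{A}$. If $x\in X_+$, the integrand takes values in $X_+$ for every $t\ge 0$, and closedness of $X_+$ transfers positivity to the integral, giving positivity of the resolvent on the ray $(\omega_0,\infty)\subset\rho(\mathscr{A})$. For the converse implication I would use the exponential / Post-Widder-type formula $S(t)x=\lim_{n\to\infty}\bigl[\tfrac{n}{t}(\tfrac{n}{t}-\mathscr{A})^{-1}\bigr]^n x$, which holds strongly because $\mathscr{A}$ generates a $C_0$-semigroup. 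For $n/t>\omega$ the bracketed operator is a nonnegative scalar multiple of a positive operator, integer powers of positive operators remain positive, and the strong limit lands in $X_+$ by closedness, so $S(t)X_+\subset X_+$ for every $t\ge 0$.

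The integral representation itself is precisely the Bochner-Laplace formula in the regime $\lambda>\omega_0(S)$, and the main obstacle — the reason the theorem writes $\lim_{b\to\infty}\int_0^b$ rather than $\int_0^\infty$ — is that validity must be extended down to all $\lambda>S(\mathscr{A})$, whereas in general $S(\mathscr{A})\le\omega_0(S)$ with possibly strict inequality, and on the gap the improper integral need not be absolutely convergent. The key idea for handling this will be monotonicity inside the cone: for $x\in X_+$, the partial integrals $\int_0^b e^{-\lambda t}S(t)x\,dt$ increase in $b$ inside $X_+$, and one uses resolvent positivity together with an elementary resolvent identity to bound them above by $(\lambda-\mathscr{A})^{-1}x$. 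Normality of $X_+$ then converts this order-boundedness into norm-convergence, and the limit is identified with $(\lambda-\mathscr{A})^{-1}x$ by analyticity of the resolvent on $\rho(\mathscr{A})$. Finally, the case of general $x\in X$ follows by writing $x$ as a difference of elements of $X_+$, which is possible because the cone is generating.
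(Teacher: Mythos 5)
The paper itself offers no proof of this statement --- it is quoted verbatim from Thieme \cite{Thieme2009} (``Below, we list some results from \cite{Thieme2009}'') --- so your attempt has to be measured against the known proof. The two equivalence implications in your sketch are sound and standard: the Bochner--Laplace representation $(\lambda-\mathscr{A})^{-1}x=\int_0^\infty e^{-\lambda t}S(t)x\,dt$ for $\lambda>\omega_0(S)$ gives resolvent positivity from semigroup positivity (and you correctly use the kernel $e^{-\lambda t}$; the paper's display $e^{\lambda t}$ is a typo), while the Euler/Post--Widder formula $S(t)x=\lim_n\bigl[\tfrac{n}{t}(\tfrac{n}{t}-\mathscr{A})^{-1}\bigr]^n x$ gives the converse. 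One small slip there: normality does \emph{not} force $X_+$ to be closed (the open quadrant in $\mathbb{R}^2$, together with the origin, is a normal non-closed cone); closedness is part of the standard definition of a cone in an ordered Banach space, and it holds for the paper's $X=C(\bar\Omega)$, so your limit arguments survive, but the stated justification is wrong.

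The genuine gap is in the crucial step, the extension from $\lambda>\omega_0(S)$ down to $\lambda>S(\mathscr{A})$. Your resolvent identity is fine and yields, for $x\in X_+$ and $\lambda>S(\mathscr{A})$,
\begin{equation*}
\int_0^b e^{-\lambda t}S(t)x\,dt=(\lambda-\mathscr{A})^{-1}x-e^{-\lambda b}S(b)(\lambda-\mathscr{A})^{-1}x\le(\lambda-\mathscr{A})^{-1}x,
\end{equation*}
so the partial integrals are increasing in $b$ and order-bounded. But your claim that ``normality converts this order-boundedness into norm-convergence'' is false: in a normal cone, an increasing order-bounded sequence need not converge in norm, and the paper's own space is a counterexample --- in $C([0,1])$ take $u_n(x)=\min(1,nx)$, which is increasing and bounded above by the constant $1$ yet satisfies $\|u_{2n}-u_n\|_\infty\ge\tfrac12$. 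Normality only turns order bounds into norm bounds. The missing idea is exactly the heart of Thieme's/Arendt's argument: a vector-valued Landau--Pringsheim theorem stating that for a cone-valued function the abscissa of improper convergence of its Laplace transform is a singular point of the analytic extension; since $(S(\mathscr{A}),\infty)\subset\rho(\mathscr{A})$ and the resolvent is holomorphic near each such real point, the abscissa must be at most $S(\mathscr{A})$. Concretely, one first gets \emph{weak} convergence --- normality of $X_+$ makes the dual cone generating ($X^*=X_+^*-X_+^*$), so the scalar Landau theorem applies to each $t\mapsto\langle x^*,S(t)x\rangle\ge0$ with $x^*\in X_+^*$ --- and only then upgrades to norm convergence, using the fact (via Mazur's lemma plus normality) that an increasing sequence which converges \emph{weakly} does converge in norm; monotonicity alone is not enough. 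Without some form of this positivity-of-Taylor-coefficients/analytic-continuation argument, your sketch proves the representation only for $\lambda>\omega_0(S)$, which is strictly weaker than the theorem whenever $S(\mathscr{A})<\omega_0(S)$.
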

\begin{theorem}\label{theorem204}
Let $\mathscr{B}$ be a resolvent-positive operator on $X$, $S(\mathscr{B})<0$ and $\mathscr{A}=\mathscr{C}+\mathscr{B}$ a positive perturbation of $\mathscr{B}$. If $\mathscr{A}$ is resolvent-positive, $S(\mathscr{A})$ has the same sign as $r(-\mathscr{C}\mathscr{B}^{-1})-1$.
\end{theorem}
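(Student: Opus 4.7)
The plan is to factor $\lambda-\mathscr{A}$ through the resolvent of $\mathscr{B}$ and reduce the problem to a Krein--Rutman style analysis of a positive operator family. Since $S(\mathscr{B})<0$, the operator $\mathscr{B}$ is invertible and $-\mathscr{B}^{-1}=(0-\mathscr{B})^{-1}$ is positive; therefore $K(\lambda):=(\lambda-\mathscr{B})^{-1}\mathscr{C}$ is a bounded positive operator for every $\lambda\ge 0$, and from the factorization
$$\lambda-\mathscr{A}=(\lambda-\mathscr{B})\bigl(I-K(\lambda)\bigr),$$
valid for $\lambda\in\rho(\mathscr{B})$, one reads off that $\lambda\in\rho(\mathscr{A})$ whenever $1\in\rho(K(\lambda))$. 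Since $(\lambda-\mathscr{B})^{-1}\mathscr{C}$ and $\mathscr{C}(\lambda-\mathscr{B})^{-1}$ share their nonzero spectrum, $r(K(0))=r(-\mathscr{C}\mathscr{B}^{-1})$, which is precisely the quantity in the statement.

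Next I would establish that $\lambda\mapsto r(K(\lambda))$ is continuous, non-increasing on $(S(\mathscr{B}),\infty)$ and tends to $0$ as $\lambda\to\infty$. Norm continuity of $K(\cdot)$ follows from the resolvent identity; monotonicity follows because $(\lambda-\mathscr{B})^{-1}$ is decreasing in $\lambda$ in the positive-operator order, a consequence of the representation $(\lambda-\mathscr{B})^{-1}x=\int_{0}^{\infty}e^{-\lambda t}S(t)x\,dt$ guaranteed by Theorem \ref{theorem203}; and $\|K(\lambda)\|=O(1/\lambda)$ handles the decay.

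With these ingredients the trichotomy is clean. If $r(-\mathscr{C}\mathscr{B}^{-1})<1$, monotonicity gives $r(K(\lambda))<1$ for all $\lambda\ge 0$, so by the Neumann series $I-K(\lambda)$ is invertible on $[0,\infty)$, hence $[0,\infty)\subset\rho(\mathscr{A})$ and $S(\mathscr{A})<0$. If $r(-\mathscr{C}\mathscr{B}^{-1})>1$, continuity together with $r(K(\lambda))\to 0$ produces some $\lambda_\ast>0$ with $r(K(\lambda_\ast))=1$; using that on an ordered Banach space with normal and generating cone the spectral radius of a bounded positive operator always belongs to its spectrum, $1\in\sigma(K(\lambda_\ast))$, so $\lambda_\ast\in\sigma(\mathscr{A})$ and $S(\mathscr{A})\ge\lambda_\ast>0$. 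In the borderline case $r(-\mathscr{C}\mathscr{B}^{-1})=1$ the same fact places $0\in\sigma(\mathscr{A})$, while monotonicity gives $r(K(\lambda))<1$ for $\lambda>0$ and therefore $\sigma(\mathscr{A})\cap(0,\infty)=\varnothing$, pinning $S(\mathscr{A})=0$.

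The step I expect to be the main obstacle is the Krein--Rutman input that promotes $r(K(\lambda_\ast))=1$ to $1\in\sigma(K(\lambda_\ast))$: on a general ordered Banach space this requires either compactness of $K(\lambda)$ or an appeal to the peripheral-spectrum theorem for positive operators on Banach lattices with a normal generating cone. A secondary subtlety is that the real-axis analysis above bounds $\sigma(\mathscr{A})\cap\mathbb{R}$, whereas $S(\mathscr{A})$ concerns complex points of the spectrum; ruling out $\lambda\in\sigma(\mathscr{A})$ with $\mathrm{Re}\,\lambda>0$ in the cases $r(K(0))\le 1$ must be done by dominating the complex resolvent $(\lambda-\mathscr{B})^{-1}$ by its real counterpart $(\mathrm{Re}\,\lambda-\mathscr{B})^{-1}$, a standard device for resolvent-positive generators.
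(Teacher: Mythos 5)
The paper itself contains no proof of this statement: Theorem~\ref{theorem204} is quoted verbatim from Thieme \cite{Thieme2009} (Theorem~3.5 there), so your attempt can only be compared with that source. Your skeleton is in fact the standard one underlying Thieme's argument: the factorization $\lambda-\mathscr{A}=(\lambda-\mathscr{B})\bigl(I-K(\lambda)\bigr)$, the identification $r(K(0))=r(-\mathscr{C}\mathscr{B}^{-1})$ via the coincidence of the nonzero spectra of $ST$ and $TS$, and the positivity input $r(T)\in\sigma(T)$. On the last point, the obstacle you flag is benign: no compactness is needed, since the spectral radius of a bounded positive operator belongs to its spectrum whenever the cone is normal and generating (and in this paper $X=C(\bar{\Omega})$ is even a Banach lattice). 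Likewise your second worry, complex spectral points with positive real part, is settled by a sharper standard fact than your domination device (which would also work here): for a resolvent-positive operator, $S(\mathscr{A})\in\sigma(\mathscr{A})$ whenever $\sigma(\mathscr{A})\neq\emptyset$, by a Pringsheim-type power-series argument using normality of the cone, so real-axis control of the spectrum suffices.

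The genuine gap is your continuity claim for $\lambda\mapsto r(K(\lambda))$: norm continuity of $K(\cdot)$, which does follow from the resolvent identity, gives only \emph{upper} semicontinuity of the spectral radius. Combined with your monotonicity this yields left-continuity of the non-increasing function $r(K(\cdot))$, but not right-continuity, and right-continuity is exactly what your intermediate-value step needs: with a downward jump, the point $\lambda_\ast=\sup\{\lambda\ge0:\,r(K(\lambda))\ge1\}$ could satisfy $r(K(\lambda_\ast))>1$ with $r(K(\lambda))<1$ for all $\lambda>\lambda_\ast$, and your conclusion $1\in\sigma(K(\lambda_\ast))$ evaporates; the same defect infects the borderline case $r(K(0))=1$. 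The standard repair, and what Thieme actually does, is to bypass the intermediate value theorem altogether: prove from the resolvent-positivity of $\mathscr{A}$ that for $\lambda>S(\mathscr{B})$ one has $\lambda>S(\mathscr{A})$ if and only if $r(K(\lambda))<1$, using the positive Neumann series $(\lambda-\mathscr{A})^{-1}=\bigl(I-K(\lambda)\bigr)^{-1}(\lambda-\mathscr{B})^{-1}$ and monotone convergence, and then read off the sign trichotomy by specializing to $\lambda=0$. A smaller slip: you justify resolvent monotonicity through the Laplace representation of Theorem~\ref{theorem203}, which presupposes that $\mathscr{B}$ generates a $C_0$-semigroup — resolvent-positivity alone does not give this (though it holds for the bounded operator $A$ of \eqref{202} used in this paper); the resolvent identity
\begin{equation*}
(\mu-\mathscr{B})^{-1}-(\lambda-\mathscr{B})^{-1}
=(\lambda-\mu)(\mu-\mathscr{B})^{-1}(\lambda-\mathscr{B})^{-1}\ge0,
\qquad \lambda>\mu>S(\mathscr{B}),
\end{equation*}
delivers it directly and should replace that appeal.
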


Define
\begin{equation}\label{202}
A[u](x):=d_I\left(\int_{\Omega}J(x-y)u(y)dy-u(x)\right)
-\gamma(x)u(x).
\end{equation}
Following from \cite{KLSH2010,Pazy}, we know that the operators $L$ ($L$ is defined as \eqref{100}) and $A$ can respectively generate a positive $C_0$-semigroup on $X$. Thus, according to Theorem \ref{theorem203}, we know that the operators $L$ and $A$ are all resolvent-positive operators. Meanwhile, we have the following result.
\begin{proposition}\label{pro205}
If the operator $A$ is defined by \eqref{202}, then $S(A)<0$.
\end{proposition}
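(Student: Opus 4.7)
The plan is to prove directly that every $\lambda\in\mathbb{C}$ with $Re\,\lambda > -\gamma_*$ belongs to the resolvent set $\rho(A)$, where $\gamma_*:=\min_{x\in\bar\Omega}\gamma(x)>0$. Since this forces $\sigma(A)\subset\{\mu\in\mathbb{C}:Re\,\mu\le -\gamma_*\}$, the bound $S(A)\le -\gamma_*<0$ follows at once.

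First I would rewrite $A$ as $Au = d_I Ku - (d_I+\gamma)u$, where $Ku(x) := \int_\Omega J(x-y)u(y)\,dy$. Under \eqref{103}, $K$ is a positive bounded operator on $X=C(\bar\Omega)$ with
$$\|K\|_{X\to X} \;\le\; \sup_{x\in\bar\Omega}\int_\Omega J(x-y)\,dy \;\le\; \int_{\mathbb{R}^N} J(y)\,dy \;=\; 1.$$
Given $f\in X$, the equation $(\lambda-A)u=f$ is then equivalent to the fixed-point problem
$$u = T_\lambda u + f/m_\lambda, \qquad T_\lambda u(x) := \frac{d_I}{m_\lambda(x)}\,Ku(x), \qquad m_\lambda(x):= \lambda + d_I + \gamma(x).$$

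For $Re\,\lambda > -(d_I+\gamma_*)$ the function $m_\lambda$ is nowhere zero and
$$|m_\lambda(x)| \;\ge\; Re\,m_\lambda(x) \;=\; Re\,\lambda + d_I + \gamma(x) \;\ge\; Re\,\lambda + d_I + \gamma_*,$$
which yields
$$\|T_\lambda\|_{X\to X} \;\le\; \frac{d_I}{Re\,\lambda + d_I + \gamma_*}\,\sup_{x\in\bar\Omega}\int_\Omega J(x-y)\,dy \;\le\; \frac{d_I}{Re\,\lambda + d_I + \gamma_*}.$$
This last bound is strictly less than $1$ precisely when $Re\,\lambda > -\gamma_*$. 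For any such $\lambda$ the Neumann series $\sum_{n\ge 0}T_\lambda^n$ converges in $\mathcal{L}(X)$, so $(\lambda-A)^{-1}$ exists as a bounded operator on $X$ and $\lambda\in\rho(A)$.

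There is no genuine obstacle, but it is worth emphasising that the decisive estimate is on $Re\,\lambda$ rather than $|\lambda|$, and that the uniform positivity of the recovery rate $\gamma$ (not any assumption on the dispersal rate $d_I$) is what forces $S(A)<0$. An alternative route would exploit the symmetry of $J$ to make $A$ self-adjoint on $L^2(\Omega)$ and read off $\sigma_{L^2}(A)\subset(-\infty,-\gamma_*]$ from the Rayleigh quotient $\langle Au,u\rangle_{L^2}\le -\gamma_*\|u\|_{L^2}^2$; however, this would require an additional step to transfer spectral information from $L^2$ back to $C(\bar\Omega)$, so the Neumann-series argument directly in $X$ appears to be the more economical route.
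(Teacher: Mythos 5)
Your proof is correct, and it takes a genuinely different route from the paper's. The paper works through the variational quantity $\mu_p$ on $L^2(\Omega)$: using the symmetry of $J$ it shows the Rayleigh quotient is at most $-\min_{\bar{\Omega}}\gamma<0$, but since $A$ itself need not possess a principal eigenvalue, it then approximates $\gamma$ by a sequence $\gamma_n$ flattened near its minimum point so that each perturbed operator $A_n$ \emph{does} admit a principal eigenpair with $\mu_p^n=S(A_n)$ (citing Bates--Zhao), and finally passes to the limit using the continuity of the spectral bound, $S(A_n)\to S(A)$, from Shen--Zhang. Your argument bypasses all of this: you show directly that the half-plane $\{\lambda:\mathrm{Re}\,\lambda>-\gamma_*\}$ lies in $\rho(A)$ by a Neumann-series bound on $C(\bar{\Omega})$, which is elementary, self-contained (no appeal to principal-eigenvalue existence theory or to the continuity of $S$ under perturbation), and moreover quantitative, giving $S(A)\le-\gamma_*$ rather than bare negativity. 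Your closing remark about the $L^2$-to-$C(\bar{\Omega})$ transfer is apt: that transfer is precisely the difficulty the paper's approximation scheme is built to handle, since the identity $S(A_n)=\mu_p^n$ is what links the $L^2$ variational quantity to the spectral bound on $X$. What the paper's longer route buys is coherence with the rest of the article: the same approximation-by-principal-eigenpairs technique and the quantity $\mu_p(d_I)$ recur in Corollary \ref{remark206} and in the proofs of Theorems \ref{theorem302} and \ref{theorem401}, so the machinery is amortized there, whereas your Neumann-series estimate, while the more economical proof of this proposition in isolation, does not generalize to the operator $L=A+F$, whose spectral bound can have either sign.
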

\begin{proof}
Define
\begin{equation*}
\mu_p=\sup_{\scriptstyle \varphi\in L^2(\Omega) \atop\scriptstyle \varphi\neq0
}\frac{d_I\left(\int_{\Omega}\int_{\Omega}
J(x-y)\varphi(y)\varphi(x)dydx-\int_{\Omega}\varphi^2(x)dx\right)
-\int_{\Omega}\gamma(x)
\varphi^2(x)dx}{\int_{\Omega}\varphi^2(x)dx}.
\end{equation*}
Due to
\begin{equation*}
\begin{aligned}
&\int_{\Omega}\int_{\Omega}
J(x-y)\varphi(y)\varphi(x)dydx-\int_{\Omega}\varphi^2(x)dx\\
\le& -\frac12\int_{\Omega}\int_{\Omega}
J(x-y)(\varphi(y)-\varphi(x))^2dydx\le0,
\end{aligned}
\end{equation*}
we have $\mu_p<0$. Since $\gamma(x)\in C(\bar{\Omega})$, there exists some $x_0\in\bar{\Omega}$ such that $\gamma(x_0)=\min\limits_{x\in\bar{\Omega}}\gamma(x)$. Define a function sequence as follows:
\begin{equation*}
\gamma_n(x)=
\begin{cases}
\gamma(x_0),&~x\in B_{x_0}(\frac1n),\\
\gamma_{n,1}(x),&~x\in(B_{x_0}(\frac2n)\backslash B_{x_0}(\frac1n)),\\
\gamma(x),&~x\in\Omega\backslash B_{x_0}(\frac2n),
\end{cases}
\end{equation*}
where $B_{x_0}(\frac1n)=\{x\in\Omega|~|x-x_0|<\frac1n\}$, $\gamma_{n,1}(x)$ satisfies $\gamma_{n,1}(x)\ge \gamma(x_0)$, and $\gamma_{n,1}(x)$ is continuous in $\Omega$. Indeed, $\gamma_{n,1}(x)$ exists if only we take $n$ is large enough, denoted by $n\ge n_0>0$. By the construction of function sequences $\{\gamma_n(x)\}_{n=1}^{\infty}$, we know that $\|\gamma_n-\gamma\|_{L^\infty(\Omega)}\to0$ as $n\to+\infty$ and the eigenvalue problem
\begin{equation*}
A_n[\varphi](x):=d_I\left(\int_{\Omega}
J(x-y)\varphi(y)dy-\varphi(x)\right)-\gamma_n(x)\varphi(x)
=\mu\varphi(x)~~~\text{in}~\Omega
\end{equation*}
admits a principal eigenpair $(\mu_p^n, \varphi_n(x))$, where $\mu_p^n\to\mu_p$ as $n\to+\infty$. Note that $\mu_p^n=S(A_n)$ for each given $n$ (see Bates and Zhao \cite{BZH2007}). Since $\mu_p<0$, there exists some $\delta>0$ such that $\mu_p^n<-\delta$ provided $n\ge n_0$ for some $n_0>0$. Thus, we have $S(A_n)<-\delta$ for $n\ge n_0$. Due to $\gamma_n\to \gamma$ as $n\to+\infty$, we can obtain that $S(A_n)\to S(A)$ as $n\to+\infty$, see \cite[Lemma 3.1]{SZ2012}. This implies that $S(A)<0$. The proof is complete.
\end{proof}

Now, consider the nonlocal dispersal problem
\begin{equation*}
\frac{\partial u_I(x,t)}{\partial t}=d_I\left(\int_{\Omega}J(x-y)u_I(y,t)dy-u_I(x,t)\right)
-\gamma(x)u_I(x,t),
\end{equation*}
where $x\in\Omega$ and $t>0$. If $u_I(x,t)$ is thought of as a density of the infected individuals at a point $x$ at time $t$, $J(x-y)$ is thought of as the probability distribution of jumping from location $y$ to location $x$, then $\int_{\mathbb{R}^N}J(y-x)u(y,t)dy$ is the rate at which the infected individuals are arriving at position $x$ from all other places, and $-\int_{\mathbb{R}^N}J(y-x)u(x,t)dy$ is the rate at which they are leaving location $x$ to travel to all other sites.
By the theory of semigroups of linear operators, we know that the operator $A$ can generate a uniformly continuous semigroup, denoted by $T(t)$. Let $\phi(x)$ be the distribution of initial infection. Then, $T(t)\phi(x)$ is the distribution of the infective members at time $t$ under the synthetical influences of mobility and transfer of individuals in infected compartments.
Inspired by the work in \cite{WZ2012,Diekmann1990,Driessche2002,WZ2011}, we may define the spectral radius of $\mathscr{L}$ as the basic reproduction number of system \eqref{101}, that is $R_0=r(\mathscr{L})$, where
\begin{equation*}
\mathscr{L}[\phi](x):=\beta(x)\int_{0}^{\infty}T(t)\phi dt.
\end{equation*}
Let $F[\psi](x)=\beta(x)\psi(x)$ for $\psi\in X$. Then, we have the following result.
\begin{theorem}\label{theorem206}
$R_0-1$ has the same sign as $\lambda_*:=S(A+F)$.
\end{theorem}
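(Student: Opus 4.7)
The plan is to recognize this statement as a direct application of Theorem \ref{theorem204} with $\mathscr{B} = A$ and $\mathscr{C} = F$, so that $\mathscr{A} = A + F$, after reidentifying $\mathscr{L}$ as $-FA^{-1}$. The conceptual content is almost entirely supplied by the abstract machinery stated earlier; what remains is a careful verification of hypotheses.

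First I would verify the four ingredients needed by Theorem \ref{theorem204}. Resolvent-positivity of $A$ is already recorded in the paragraph following \eqref{202}: $A$ generates a positive $C_0$-semigroup $T(t)$ on $X=C(\bar\Omega)$, so Theorem \ref{theorem203} applies. The strict negativity $S(A)<0$ is exactly the content of Proposition \ref{pro205}. The operator $F\psi = \beta(x)\psi$ is obviously positive since $\beta(x)>0$ on $\bar\Omega$, and it is bounded on $X$, so the sum $A+F$ is still the generator of a positive $C_0$-semigroup (this was already observed for $L = A+F$ in the same paragraph), hence $A+F$ is resolvent-positive by Theorem \ref{theorem203}.

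Second I would identify $\mathscr{L}$ with $-FA^{-1}$. Since $S(A)<0$, the point $\lambda=0$ lies in the resolvent set of $A$, so $A^{-1}$ exists as a bounded operator on $X$. Applying Theorem \ref{theorem203} at $\lambda=0$ gives
\begin{equation*}
-A^{-1}\phi \;=\; \lim_{b\to\infty}\int_0^b T(t)\phi\,dt \;=\; \int_0^\infty T(t)\phi\,dt \qquad \text{for all } \phi\in X,
\end{equation*}
and therefore
\begin{equation*}
\mathscr{L}[\phi](x) \;=\; \beta(x)\int_0^\infty T(t)\phi\,dt \;=\; F\bigl(-A^{-1}\bigr)\phi \;=\; -FA^{-1}\phi.
\end{equation*}
Consequently $R_0 = r(\mathscr{L}) = r(-FA^{-1})$.

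Finally, Theorem \ref{theorem204} applied to $\mathscr{B}=A$, $\mathscr{C}=F$, $\mathscr{A}=A+F$ yields that $S(A+F)$ has the same sign as $r(-FA^{-1})-1 = R_0 - 1$, which is precisely the claim. The main obstacle I anticipate is purely bookkeeping: being sure that the hypothesis ``$\mathscr{A}$ is resolvent-positive'' in Theorem \ref{theorem204} is legitimately available here, which is why the identification of $A+F$ with the operator $L$ from \eqref{100} (already known to generate a positive semigroup) is the crucial sentence in the argument. Everything else is a routine translation from the semigroup language into the operator-theoretic statement.
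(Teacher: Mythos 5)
Your proposal is correct and follows essentially the same route as the paper: both set $\lambda=0$ in the resolvent formula of Theorem \ref{theorem203} (justified by $S(A)<0$ from Proposition \ref{pro205}) to identify $\mathscr{L}=-FA^{-1}$, and then apply Theorem \ref{theorem204} with $\mathscr{B}=A$, $\mathscr{C}=F$, $\mathscr{A}=L=A+F$. Your version is in fact slightly more careful than the paper's, since you explicitly check each hypothesis of Theorem \ref{theorem204} (positivity of $F$, resolvent-positivity of $A+F$ via its identification with $L$) rather than citing them in passing.
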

\begin{proof}
Since $A$ is the generator of the semigroup $T(t)$ on $X$ and $A$ is resolvent-positive, it then follows from Theorem \ref{theorem203} that
\begin{equation}\label{203}
(\lambda I-A)^{-1}\phi=\int_{0}^{\infty}e^{-\lambda t}T(t)\phi dt~\text{for}~\lambda>S(A),~\phi\in X.
\end{equation}
Choosing $\lambda=0$ in \eqref{203}, we obtain
\begin{equation}
-A^{-1}\phi=\int_{0}^{\infty}T(t)\phi dt~\text{for all}~\phi\in X.
\end{equation}
Then, the definition of operator $\mathscr{L}$ implies that $\mathscr{L}=-FA^{-1}$. Note that $L=A+F$. Since $L$ is a resolvent-positive operator and $S(A)<0$, following from Theorem \ref{theorem204}, we have $S(L)$ has the same sign as $r(\mathscr{L})-1=R_0-1$. The proof is complete.
\end{proof}

Consider the eigenvalue problem
\begin{equation}\label{204}
L[u](x)=\mu u(x)~~~~\text{in}~~\Omega.
\end{equation}
It is well known that if $\mu_p(d_I)$ is the principal eigenvalue of \eqref{204}, then $\mu_p(d_I)=S(A+F)$, see
\cite{BZH2007}. In this case, $\mu_p(d_I)$ has the same sign as $R_0-1$ according to Theorem \ref{theorem206}. However, we still have the following result.
\begin{corollary}\label{remark206}
$\mu_p(d_I)$ has the same sign as $R_0-1$.
\end{corollary}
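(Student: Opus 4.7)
The plan is to identify $\mu_p(d_I)$ with $S(L)$ and then appeal to Theorem \ref{theorem206}, which already says $R_0-1$ has the same sign as $S(L)=S(A+F)$. My approach mirrors the strategy used in Proposition \ref{pro205}: approximate the zero-order coefficient $m(x):=\beta(x)-\gamma(x)$ by continuous functions $m_n$ that are constant near a maximum point of $m$, so that each resulting operator $L_n$ admits a genuine principal eigenvalue, and then pass to the limit.

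Concretely, I would pick $x_0\in\bar\Omega$ achieving $m(x_0)=\max_{\bar\Omega}m$, and, in direct analogy with Proposition \ref{pro205} but with $\min$ replaced by $\max$ and $\gamma$ replaced by $m$, set $m_n\equiv m(x_0)$ on $B_{x_0}(1/n)\cap\Omega$, $m_n\equiv m$ outside $B_{x_0}(2/n)$, and interpolate continuously in the annular region with $m_n\le m(x_0)$, obtaining $\|m_n-m\|_{L^\infty(\Omega)}\to0$. The approximating operator
\begin{equation*}
L_n[\varphi](x)=d_I\Big(\int_\Omega J(x-y)\varphi(y)\,dy-\varphi(x)\Big)+m_n(x)\varphi(x)
\end{equation*}
now has a coefficient attaining its maximum on the open ball $B_{x_0}(1/n)$, so the same principal eigenvalue machinery invoked in Proposition \ref{pro205} produces a principal eigenpair $(\mu_p^n,\varphi_n)$ with $\varphi_n>0$ and with $\mu_p^n=S(L_n)$.

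Next, I would exploit the symmetry of $J$: this makes $L_n$ self-adjoint on $L^2(\Omega)$, so its principal eigenvalue admits the Rayleigh characterization
\begin{equation*}
\mu_p^n=\sup_{\|\varphi\|_{L^2}=1}\Big[d_I\Big(\iint_{\Omega\times\Omega}J(x-y)\varphi(y)\varphi(x)\,dy\,dx-\int_\Omega\varphi^2\,dx\Big)+\int_\Omega m_n\varphi^2\,dx\Big],
\end{equation*}
which differs from the definition \eqref{110} of $\mu_p(d_I)$ only in $m_n$ versus $m$, a difference controlled uniformly by $\|m_n-m\|_\infty\to 0$; hence $\mu_p^n\to\mu_p(d_I)$. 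Simultaneously, the same Shen--Zhang perturbation lemma quoted in the last line of Proposition \ref{pro205} will give $S(L_n)\to S(L)$. Passing to the limit in $\mu_p^n=S(L_n)$ then yields $\mu_p(d_I)=S(L)$, and Theorem \ref{theorem206} finishes the argument.

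The hard part, I expect, will be the convergence $S(L_n)\to S(L)$ of the spectral bound in $C(\bar\Omega)$ under an $L^\infty$ perturbation of the zero-order coefficient; this is a genuinely nonlocal statement and must be imported rather than argued from scratch. A secondary subtlety worth flagging is the space mismatch, namely that $\mu_p(d_I)$ is an $L^2$-variational object while $S(L)$ is the spectral bound of an operator acting on continuous functions; the approximation scheme sidesteps this by making both quantities coincide with the common principal eigenvalue $\mu_p^n$ at each finite $n$, after which the two limiting values are forced to agree.
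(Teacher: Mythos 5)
Your proposal is correct and follows essentially the same route as the paper, which disposes of this corollary precisely by remarking that $\mu_p(d_I)=S(A+F)$ holds whether or not $\mu_p(d_I)$ is a genuine principal eigenvalue, obtained by the same approximation scheme as in Proposition \ref{pro205} (flattening the zero-order coefficient near an extremum so that a principal eigenpair $(\mu_p^n,\varphi_n)$ with $\mu_p^n=S(L_n)$ exists, then letting $\mu_p^n\to\mu_p(d_I)$ via the Rayleigh characterization and $S(L_n)\to S(L)$ via the Shen--Zhang perturbation lemma), after which Theorem \ref{theorem206} yields the sign equivalence. Your write-up merely makes explicit the details (the $\max$ versus $\min$ adaptation, the uniform $\|m_n-m\|_{L^\infty}$ control of the Rayleigh quotients, and the $L^2$ versus $C(\bar\Omega)$ reconciliation) that the paper leaves implicit.
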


This conclusion can be proved by the same arguments as Proposition \ref{pro205} and Theorem \ref{theorem206}. In fact, $\mu_p(d_I)=S(A+F)$ whether $\mu_p(d_I)$ is the principal eigenvalue of \eqref{204} or not.

Next, we intend to discuss the effect of $\beta(x),\gamma(x)$ on the basic reproduction number $R_0$. Consider the eigenvalue problem
\begin{equation}\label{305}
\begin{cases}
\int_{\mathbb{R}^N}J(x-y)(\varphi(y)-\varphi(x))dy=-\lambda \varphi(x)&~~\text{in}~\Omega,\\
\varphi(x)=0&~~\text{on}~\mathbb{R}^N\backslash\Omega,
\end{cases}
\end{equation}
which has been studied by Garc\'{i}a-Meli\'{a}n and Rossi \cite{MR2009}.
\begin{lemma}\label{lemma302}
The eigenvalue problem \eqref{305} admits a unique principal eigenvalue $\lambda_1$ and its corresponding eigenfunction $\varphi(x)$ is of class $C(\bar{\Omega})$. Moreover, $0<\lambda_1<1$ and
\begin{equation*}
\lambda_1=\inf_{\psi\in L^2(\Omega),\psi\neq0}
\frac{\int_{\Omega}\psi^2(x)dx-\int_{\Omega}\int_{\Omega}
J(x-y)\psi(y)\psi(x)dydx}{\int_{\Omega}\psi^2(x)dx}.
\end{equation*}
\end{lemma}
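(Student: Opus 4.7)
The plan is to reduce Lemma~\ref{lemma302} to the spectral theory of a compact self-adjoint operator on $L^{2}(\Omega)$. Since the eigenfunction vanishes outside $\Omega$ and $\int_{\mathbb{R}^{N}}J(x-y)\,dy=1$, equation \eqref{305} is equivalent, for $x\in\Omega$, to
$$K[\varphi](x):=\int_{\Omega}J(x-y)\varphi(y)\,dy=(1-\lambda)\varphi(x),$$
so $\lambda$ is an eigenvalue of \eqref{305} if and only if $\mu:=1-\lambda$ is an eigenvalue of $K$. The kernel $J(x-y)$ is bounded, continuous, symmetric, nonnegative, and positive at the diagonal; hence $K:L^{2}(\Omega)\to L^{2}(\Omega)$ is Hilbert--Schmidt (thus compact), self-adjoint, and positive. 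The spectral theorem combined with a Krein--Rutman style irreducibility argument (available because $J(0)>0$ and $\Omega$ is connected) then shows that $\mu_{1}:=\|K\|$ is attained as an eigenvalue with a strictly positive eigenfunction $\varphi$. The Rayleigh quotient characterization $\mu_{1}=\sup_{\psi\ne 0}\langle K\psi,\psi\rangle/\|\psi\|_{L^{2}}^{2}$ immediately yields the displayed variational formula for $\lambda_{1}=1-\mu_{1}$.

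Next I would establish the two-sided bound $0<\lambda_{1}<1$. For $\lambda_{1}<1$ (i.e.\ $\mu_{1}>0$), testing the Rayleigh quotient with any nontrivial nonnegative $\psi\in C_{c}(\Omega)$ of small support yields a positive value, since $J>0$ near the origin. For $\lambda_{1}>0$ (i.e.\ $\mu_{1}<1$), the AM--GM inequality combined with the symmetry of $J$ and Fubini gives
$$\iint_{\Omega\times\Omega}J(x-y)\psi(x)\psi(y)\,dy\,dx\le\int_{\Omega}\alpha(x)\psi(x)^{2}\,dx,\qquad \alpha(x):=\int_{\Omega}J(x-y)\,dy\le 1,$$
so $\int\psi^{2}-\iint J\psi\psi\ge\int_{\Omega}(1-\alpha(x))\psi(x)^{2}\,dx\ge0$. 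The hypothesis $\int_{\Omega}J(x-y)\,dy\not\equiv 1$ in \eqref{103} forces $1-\alpha>0$ on a nonempty open subset of $\Omega$, from which the strict inequality $\mu_{1}<1$ must be extracted.

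Continuity of the eigenfunction is immediate from the representation $\varphi(x)=\mu_{1}^{-1}\int_{\Omega}J(x-y)\varphi(y)\,dy$: the right-hand side is continuous on $\bar{\Omega}$ by the uniform continuity of $J$ on compacts and dominated convergence, so $\varphi\in C(\bar{\Omega})$ (and, characteristically for nonlocal Dirichlet problems, $\varphi$ need not vanish on $\partial\Omega$). Uniqueness of the principal eigenvalue follows from $\varphi>0$ on $\bar{\Omega}$: any other eigenfunction with eigenvalue $\mu_{1}$ would have to be $L^{2}$-orthogonal to $\varphi$ and hence sign-changing, contradicting the Rayleigh maximality of $\mu_{1}$ unless it is a scalar multiple of $\varphi$.

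The main obstacle is the strict inequality $\mu_{1}<1$: the AM--GM estimate alone yields only $\mu_{1}\le\sup_{\bar{\Omega}}\alpha\le 1$, and a Rayleigh maximizing sequence could a~priori concentrate on the set $\{\alpha=1\}$. To rule this out, one exploits the equality case of AM--GM, which forces any extremizer to be constant across $J$-connected components of $\Omega$, and then observes that such a constant cannot simultaneously annihilate the penalty $(1-\alpha)\psi^{2}$ on the open set where $\alpha<1$. This is the delicate step carried out by Garc\'{\i}a-Meli\'{a}n and Rossi~\cite{MR2009}, whose construction I would invoke in order to conclude strictness and complete the proof.
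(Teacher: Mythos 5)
You should note at the outset that the paper does not prove Lemma \ref{lemma302} at all: it is quoted verbatim from Garc\'{\i}a-Meli\'{a}n and Rossi \cite{MR2009}, so your blind attempt is in effect being compared with the argument of that cited source, which your reduction essentially reconstructs. The core of your proposal is sound: the equivalence of \eqref{305} with $K[\varphi]=(1-\lambda)\varphi$ for $K[\varphi](x)=\int_\Omega J(x-y)\varphi(y)\,dy$ (using $\varphi\equiv0$ off $\Omega$ and $\int_{\mathbb{R}^N}J=1$), the Hilbert--Schmidt compactness and self-adjointness of $K$, the Rayleigh-quotient characterization giving the variational formula for $\lambda_1=1-\mu_1$, the bootstrap $\varphi=\mu_1^{-1}\int_\Omega J(\cdot-y)\varphi(y)\,dy$ for continuity on $\bar\Omega$, and the irreducibility argument from $J(0)>0$ plus connectedness of the domain are all correct and are the standard route. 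One caution on wording: $K$ is positivity-\emph{preserving} but in general not positive semidefinite (pointwise nonnegativity of $J$ does not make the quadratic form nonnegative), so $\mu_1=\|K\|$ is not automatic from self-adjointness; it follows from $|\langle K\psi,\psi\rangle|\le\langle K|\psi|,|\psi|\rangle$, valid because $J\ge0$, which pins the spectral radius at the supremum of the Rayleigh quotient. The same inequality is what makes your simplicity argument run (if $\psi$ maximizes, so does $|\psi|$, hence $|\psi|$ is a positive eigenfunction and a continuous sign-changing $\psi$ on a connected domain would have to vanish somewhere).

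The one step you flag as delicate and defer to \cite{MR2009} --- strictness of $\mu_1<1$ --- does not actually need the equality-case analysis of AM--GM, and your worry about a maximizing sequence concentrating on $\{\alpha=1\}$ dissolves inside your own framework: since $K$ is compact and $\mu_1>0$ (your bump-function test), the supremum \emph{is attained} by an eigenfunction, which by your own bootstrap and irreducibility is continuous and strictly positive on $\bar\Omega$. Plugging this extremizer into the symmetrization identity
\begin{equation*}
\int_\Omega\int_\Omega J(x-y)\varphi(x)\varphi(y)\,dy\,dx
=\int_\Omega \alpha(x)\varphi^2(x)\,dx
-\frac12\int_\Omega\int_\Omega J(x-y)\bigl(\varphi(x)-\varphi(y)\bigr)^2\,dy\,dx,
\qquad \alpha(x)=\int_\Omega J(x-y)\,dy,
\end{equation*}
gives $\mu_1\int_\Omega\varphi^2\,dx\le\int_\Omega\alpha\varphi^2\,dx$. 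By \eqref{103} and continuity of $\alpha$ there is a nonempty open subset of $\Omega$ on which $\alpha<1$, and $\varphi>0$ there, so $\int_\Omega\alpha\varphi^2\,dx<\int_\Omega\varphi^2\,dx$ strictly; hence $\mu_1<1$, i.e.\ $\lambda_1>0$. With this observation your proof is complete and self-contained, whereas the paper simply imports the statement from \cite{MR2009}.
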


\begin{corollary}\label{cor2}
Assume $\beta(x_0)>\gamma(x_0)$ for some $x_0\in\Omega$. Then there is some $\hat{d}_I>0$ such that $R_0>1$ for $0<d_I<\hat{d}_I$ and $R_0<1$ for $d_I>\hat{d}_I$. Further, if $\beta(x)<\gamma(x)$ for all $x\in\bar{\Omega}$, then $R_0<1$ for all $d_I>0$.
\end{corollary}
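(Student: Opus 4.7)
The plan is to reduce the corollary to an analysis of $\mu_p(d_I)$ via Corollary \ref{remark206}, and then read off the required monotonicity and limits from the variational characterisation \eqref{110}. For a normalised $\varphi\in L^2(\Omega)$ with $\int_\Omega\varphi^2=1$, set
$$a(\varphi):=\int_\Omega\varphi^2(x)\,dx-\int_\Omega\!\int_\Omega J(x-y)\varphi(y)\varphi(x)\,dy\,dx,\qquad b(\varphi):=\int_\Omega(\beta(x)-\gamma(x))\varphi^2(x)\,dx,$$
so that \eqref{110} becomes $\mu_p(d_I)=\sup_{\|\varphi\|_2=1}\bigl[-d_I\,a(\varphi)+b(\varphi)\bigr]$. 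This exhibits $\mu_p$ as the pointwise supremum of a family of affine functions of $d_I$ with non-positive slopes $-a(\varphi)$, hence convex and non-increasing on $(0,\infty)$.

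The crucial improvement comes from Lemma \ref{lemma302}: the variational formula for $\lambda_1$ gives the uniform bound $a(\varphi)\ge\lambda_1>0$ whenever $\|\varphi\|_2=1$. Consequently, for $0<d_0<d_1$ and any admissible $\varphi$,
$$-d_1 a(\varphi)+b(\varphi)\le -d_0 a(\varphi)+b(\varphi)-(d_1-d_0)\lambda_1,$$
so after taking the supremum $\mu_p(d_1)\le\mu_p(d_0)-(d_1-d_0)\lambda_1$. Thus $\mu_p$ is Lipschitz, strictly decreasing with slope at most $-\lambda_1$, and satisfies $\mu_p(d_I)\to-\infty$ as $d_I\to\infty$.

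Under the hypothesis $\beta(x_0)>\gamma(x_0)$, continuity yields a ball $B_r(x_0)\subset\Omega$ and $\varepsilon>0$ with $\beta-\gamma\ge\varepsilon$ on $B_r(x_0)$. Picking a non-negative $\varphi_0\in L^2(\Omega)$ supported in $\overline{B_r(x_0)}$ with $\|\varphi_0\|_2=1$, we get $b(\varphi_0)\ge\varepsilon$ and, using $J\ge0$, $a(\varphi_0)\le 1$, so $\mu_p(d_I)\ge -d_I+\varepsilon>0$ for $0<d_I<\varepsilon$. Combined with the strict monotonicity, continuity, and $\mu_p(d_I)\to-\infty$, a unique $\hat d_I\in(0,\infty)$ with $\mu_p(\hat d_I)=0$ exists, and Corollary \ref{remark206} translates the sign of $\mu_p$ into the sign of $R_0-1$. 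For the second assertion, if $\beta<\gamma$ throughout $\bar\Omega$ then compactness gives $c>0$ with $\gamma-\beta\ge c$, so $b(\varphi)\le -c$ on the unit sphere and hence $\mu_p(d_I)\le -c<0$ for every $d_I>0$, i.e.\ $R_0<1$.

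The main obstacle is precisely that the operator $L$ in \eqref{100} need not admit a principal eigenvalue, so we cannot invoke standard monotone eigenvalue perturbation arguments. Working directly with the Rayleigh-type quantity $\mu_p(d_I)$ as a supremum of affine functions sidesteps this; Lemma \ref{lemma302} supplies the nonlocal analogue of the Poincaré inequality that is needed to upgrade monotonicity to the strict, uniform slope bound, which in turn is what forces the existence and uniqueness of the threshold $\hat d_I$.
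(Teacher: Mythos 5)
Your proof is correct and takes essentially the same route as the paper: reduce to the sign of $\mu_p(d_I)$ via Corollary \ref{remark206}, use the variational formula \eqref{110} together with the bound $a(\varphi)\ge\lambda_1$ from Lemma \ref{lemma302} to obtain strict, uniform monotone decrease in $d_I$, test with a nonnegative function supported in a ball where $\beta-\gamma>0$ (resp.\ use $\gamma-\beta\ge c>0$) for the sign at small $d_I$ (resp.\ the second assertion). Your one streamlining is to extract $\mu_p(d_I)\to-\infty$ directly from the slope bound $\mu_p(d_1)\le\mu_p(d_0)-(d_1-d_0)\lambda_1$, an inequality the paper itself derives in its continuity estimate but then does not exploit, instead proving the limit via the scaling identity $\mu_p(d_I,m)=d_I\,\mu_p\left(1,\frac{m}{d_I}\right)$ and continuity of $\mu_p$ in $m$.
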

\begin{proof}
Let $m(x)=\beta(x)-\gamma(x)$ and denote $\mu_p(d_I,m):=\mu_p(d_I)$. By the definition of $\mu_p(d_I)$, it is easy to see that $\mu_p(d_I,m)$ is non-increasing on any $d_I>0$. Meanwhile, $\mu_p(d_I,m)$ is continuous on $d_I$. In fact, for any $\delta>0$, the simple calculation yields that
\begin{eqnarray*}
\mu_p(d_I+\delta,m)&=& \sup \left\{G(d_I+\delta)\Bigg|~u\in L^2(\Omega),~\int_{\Omega}u^2(x)dx=1 \right\}\\
&\le& \sup \left\{G(d_I)\Bigg|~u\in L^2(\Omega),~\int_{\Omega}u^2(x)dx=1 \right\}\\
&& +\delta \sup_{\psi\in L^2(\Omega),\int_{\Omega}\psi^2(x)dx=1}
\left\{\int_{\Omega}\int_{\Omega}
J(x-y)\psi(y)\psi(x)dydx-\int_{\Omega}\psi^2(x)dx\right\}\\
&=& \mu_p(d_I,m)-\delta\lambda_1.
\end{eqnarray*}
On the other hand, for any $\delta>0$, we have
\begin{eqnarray*}
\mu_p(d_I,m)-\delta\lambda_1 &=& \sup \left\{G(d_I)\Bigg|~u\in L^2(\Omega),~\int_{\Omega}u^2(x)dx=1 \right\}-\delta\lambda_1\\
&\le& \mu_p(d_I+\delta,m)+\delta\lambda_1-\delta\lambda_1
=\mu_p(d_I+\delta,m).
\end{eqnarray*}
Thus, for any $\delta>0$, there is
\begin{equation*}
|\mu_p(d_I+\delta,m)-\mu_p(d_I,m)|\le\delta.
\end{equation*}

Additionally, we claim that $\mu_p(0,m)>0$ if $m(x_0)>0$ for some $x_0\in\Omega$. Since $m(x)\in C(\bar{\Omega})$ and $m(x_0)>0$, there exists some ball $B_\rho(x_0)$ with center $x_0$ and radius $\rho$ such that
$m(x)>0$ for all $x\in B_\rho(x_0)$. Set $\Omega_*=B_\rho(x_0)\cap\Omega$. Then, $\Omega_*\neq\emptyset$. Now, choose some function $\varphi\in L^2(\Omega)$ satisfying $\varphi>0$ in $\Omega_*$ and $\varphi=0$ in $\Omega\backslash\Omega_*$ as a test function. Hence, it is easily seen from the definition of $\mu_p(d_I,m)$ that $\mu_p(0,m)>0$ and the claim holds. Analogously, the same discussion can give that $\mu_p(0,m)<0$ if $\beta(x)<\gamma(x)$ for all $x\in\bar{\Omega}$.

Finally, we have $\mu_p(d_I,m)\to-\infty$ as $d_I\to+\infty$. Indeed, there is
\begin{equation*}
\mu_p(d_I,m)=d_I\mu_p\left(1,\frac{m}{d_I}\right).
\end{equation*}
By the continuity of $\mu_p(d_I,m)$ on $m$, we have
\begin{equation*}
\mu_p\left(1,\frac{m}{d_I}\right)\to \mu_p(1,0)=-\lambda_1<0~~\text{as}~d_I\to+\infty.
\end{equation*}
Thus, there is some $d_0>0$ such that for any $d\ge d_0$
\begin{equation*}
\mu_p(d_I,m)\le\frac{d_I}{2}\mu_p(1,0)=-\frac{d_I}{2}\lambda_1.
\end{equation*}
Consequently, we obtain $\mu_p(d_I,m)\to-\infty$ as $d_I\to+\infty$.

Now, by the discussion as above, if there is some $x_0\in\Omega$ such that $m(x_0)>0$, then there must be some $\hat{d}_I$ so that $\mu_p(d_I)>0$ for $0<d_I<\hat{d}_I$ and $\mu_p(d_I)<0$ for $d_I>\hat{d}_I$. And if $m(x)<0$ for all $x\in\bar{\Omega}$, it must be $\mu_p(d_I)<0$ for all $d_I>0$. Consequently, following Corollary \ref{remark206}, we can complete our proof.
\end{proof}

\begin{corollary}\label{cor3}
Suppose $\int_{\Omega}\beta(x)dx>\int_{\Omega}\gamma(x)dx$.
Then there is some $\hat{d}_I>0$ such that $R_0>1$ for $0<d_I<\hat{d}_I$ and $R_0<1$ for $d_I>\hat{d}_I$.
\end{corollary}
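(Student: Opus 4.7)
The plan is to mirror the proof of Corollary \ref{cor2} and reuse three of its four ingredients verbatim; the only genuinely new step is to verify that $\mu_p(d_I)$ is positive for small $d_I$ using the weaker, integral hypothesis.

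By Corollary \ref{remark206}, $R_0-1$ has the same sign as $\mu_p(d_I) = \mu_p(d_I,m)$, where $m(x):=\beta(x)-\gamma(x)$. Thus it is enough to study the map $d_I\mapsto \mu_p(d_I,m)$. From the proof of Corollary \ref{cor2} I can import without change:
\begin{itemize}
\item[(a)] $\mu_p(d_I,m)$ is non-increasing in $d_I>0$ (trivial from the variational definition);
\item[(b)] $\mu_p(d_I,m)$ is continuous in $d_I>0$, with the same $\delta$-estimate that uses Lemma \ref{lemma302};
\item[(c)] $\mu_p(d_I,m)\to-\infty$ as $d_I\to+\infty$, via the scaling identity $\mu_p(d_I,m)=d_I\,\mu_p(1,m/d_I)$ together with $\mu_p(1,0)=-\lambda_1<0$.
\end{itemize}

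The new step is to show $\mu_p(d_I,m)>0$ for all sufficiently small $d_I>0$. I plug the constant test function $\varphi(x)\equiv |\Omega|^{-1/2}$ into the variational expression for $\mu_p(d_I,m)$. Since $\int_\Omega\varphi^2=1$, this is admissible, and it yields
\begin{equation*}
\mu_p(d_I,m)\ge \frac{d_I}{|\Omega|}\int_\Omega\!\left(\int_\Omega J(x-y)\,dy-1\right)\!dx \;+\;\frac{1}{|\Omega|}\int_\Omega m(x)\,dx.
\end{equation*}
The first term is non-positive and tends to $0$ as $d_I\to 0^+$, while the second term equals $|\Omega|^{-1}\int_\Omega(\beta-\gamma)\,dx>0$ by hypothesis. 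Hence $\liminf_{d_I\to 0^+}\mu_p(d_I,m)>0$, and by continuity (b) there is an interval $(0,d_*)$ on which $\mu_p(d_I,m)>0$.

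Combining (a), (b), (c) with this positivity at small $d_I$, the intermediate value theorem and monotonicity give a threshold $\hat{d}_I>0$ with $\mu_p(d_I,m)>0$ on $(0,\hat{d}_I)$ and $\mu_p(d_I,m)<0$ on $(\hat{d}_I,\infty)$; translating through Corollary \ref{remark206} yields the corresponding statement for $R_0$. The only mildly delicate point is the new step, and even there the choice of a constant test function makes the estimate immediate because the kernel $J$ is a probability density, so $\int_\Omega J(x-y)\,dy-1\le 0$ and the "nonlocal cost" vanishes as $d_I\to 0$; no pointwise information on $m$ is needed.
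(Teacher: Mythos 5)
Your proposal is correct and is essentially the paper's own argument: the paper disposes of Corollary \ref{cor3} in one line (``by the definition of $\mu_p(d_I)$ and the analysis as above''), which is precisely your scheme of reusing the monotonicity, continuity, and $d_I\to+\infty$ ingredients from Corollary \ref{cor2} and verifying $\mu_p(d_I,m)>0$ for small $d_I$ directly from the variational definition, with your constant test function $\varphi\equiv|\Omega|^{-1/2}$ being the natural choice under the integral hypothesis. Your bound $\mu_p(d_I,m)\ge \frac{d_I}{|\Omega|}\int_\Omega\bigl(\int_\Omega J(x-y)\,dy-1\bigr)dx+\frac{1}{|\Omega|}\int_\Omega m(x)\,dx$ is valid, and the imported $\delta$-estimate (which in fact gives $\mu_p(d_I+\delta,m)\le\mu_p(d_I,m)-\delta\lambda_1$, hence strict decrease) justifies the clean threshold dichotomy you assert.
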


This corollary can be easily obtained by the definition of $\mu_p(d_I)$ and the analysis as above.

\section{The disease-free equilibrium}
\noindent

In this section, we consider the existence, uniqueness and global stability of the disease-free equilibrium of \eqref{101}.
\begin{definition}\label{def2}
We say that a steady state $(S_*(x),I_*(x))$ of system \eqref{101} is globally stable if the solutions $(\tilde{S}(x,t),\tilde{I}(x,t))$ of \eqref{101} satisfy
\begin{equation*}
\lim_{t\to+\infty}(\tilde{S}(x,t),\tilde{I}(x,t))=
(S_*(x),I_*(x))
\end{equation*}
for any initial data $(S_0(x),I_0(x))$ that satisfies $S_0(x),I_0(x)>0$ in $\Omega$ and $S_0(x),I_0(x)\in C(\bar{\Omega})$.
\end{definition}

We first consider the following problem
\begin{equation}\label{301}
\begin{cases}
d_S\int_{\mathbb{R}^N}J(x-y)(u(y)-u(x))dy+\Lambda(x)=0&~~\text{in}~\Omega,\\
u(x)=0&~~\text{on}~\mathbb{R}^N\backslash\Omega.
\end{cases}
\end{equation}

\begin{lemma}\label{lemma301}
There is a unique positive solution $u^*(x)$ of \eqref{301} and $u^*(\cdot)\in C(\bar{\Omega})$.
\end{lemma}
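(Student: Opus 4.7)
The plan is to recast \eqref{301} as an operator equation on $C(\bar{\Omega})$. Since $u\equiv 0$ on $\mathbb{R}^N\setminus\Omega$ and $\int_{\mathbb{R}^N}J(x-y)dy=1$, equation \eqref{301} is equivalent to
$$u(x)=Ku(x)+\frac{\Lambda(x)}{d_S},\quad x\in\bar{\Omega},\qquad Ku(x):=\int_{\Omega}J(x-y)u(y)dy.$$
The bounded linear operator $K:C(\bar{\Omega})\to C(\bar{\Omega})$ is positive (it maps the positive cone into itself) and compact by an Arzel\`a--Ascoli argument, using the uniform continuity of $J$ on $\bar{\Omega}\times\bar{\Omega}$.

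The key input is Lemma \ref{lemma302}: the nonlocal Dirichlet eigenvalue problem \eqref{305} has a principal eigenvalue $\lambda_1\in(0,1)$ with a strictly positive continuous eigenfunction $\varphi$. Rewritten in terms of $K$ this reads $K\varphi=(1-\lambda_1)\varphi$, so $K$ possesses a strictly positive continuous eigenfunction at eigenvalue $1-\lambda_1<1$. The Krein--Rutman theorem, applied to the positive compact operator $K$, then forces $r(K)=1-\lambda_1<1$. Hence $I-K$ is invertible on $C(\bar{\Omega})$ and
$$u^*(x)=\frac{1}{d_S}(I-K)^{-1}\Lambda(x)=\frac{1}{d_S}\sum_{n=0}^{\infty}K^n\Lambda(x),$$
with the Neumann series converging in the sup norm. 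This gives existence and uniqueness simultaneously, and the membership $u^*\in C(\bar{\Omega})$ is automatic since each $K^n\Lambda$ lies in $C(\bar{\Omega})$ and the convergence is uniform.

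Positivity is read directly off this representation: $K$ preserves positivity and $\Lambda>0$ on $\bar{\Omega}$, so every term $K^n\Lambda/d_S$ is nonnegative, and already the $n=0$ term $\Lambda/d_S$ is strictly positive on $\bar{\Omega}$; therefore $u^*>0$ on $\bar{\Omega}$.

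The main obstacle is justifying $r(K)<1$ on $C(\bar{\Omega})$: Lemma \ref{lemma302} gives an $L^2$-variational principal eigenvalue, whereas what is needed here is control of the spectral radius of $K$ viewed as an operator on $C(\bar{\Omega})$. The remedy is exactly Krein--Rutman for positive compact operators: the existence of a strictly positive continuous eigenfunction pins the spectral radius to the corresponding eigenvalue $1-\lambda_1$. Every other verification (compactness of $K$, continuity of $K^n\Lambda$, positivity preservation, uniform convergence of the series, injectivity of $I-K$) is routine.
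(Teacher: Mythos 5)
Your proof is correct, but it takes a genuinely different route from the paper's. The paper also starts from the principal eigenpair $(\lambda_1,\varphi)$ of \eqref{305}, but uses it in sub/supersolution style: it checks that $\underline{u}=\varepsilon\varphi$ and $\overline{u}=M\varphi$, with $M=1+\max_{\bar{\Omega}}\Lambda/(\lambda_1 d_S\min_{\bar{\Omega}}\varphi)$, are ordered sub- and supersolutions of \eqref{301}, runs the standard monotone iteration to produce a positive solution, reads continuity off the fixed-point identity $u^*(x)=\int_\Omega J(x-y)u^*(y)\,dy+\Lambda(x)/d_S$, and obtains uniqueness separately, by applying a Liouville-type result (Proposition 2.2 of \cite{AMRT2010}) to the difference of two solutions, which solves the homogeneous nonlocal Dirichlet equation. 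You instead exploit linearity head-on: $u=Ku+\Lambda/d_S$ with $r(K)=1-\lambda_1<1$ and a Neumann series, which delivers existence, uniqueness, continuity, and strict positivity in one formula, and in fact yields uniqueness in the larger class of bounded (or $L^2$) solutions, since $K$ maps those into $C(\bar{\Omega})$. The paper's heavier machinery is not wasted, though: the same sub/supersolution scheme is reused verbatim for the genuinely nonlinear problem \eqref{403} in Theorem \ref{theorem401}, where your linear shortcut is unavailable.

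One step deserves tightening. What you invoke is not literally the Krein--Rutman theorem: Krein--Rutman produces a positive eigenvector at the eigenvalue $r(K)$, but without irreducibility or strong positivity of $K$ (which can fail here when $J$ has small support) it does not assert that \emph{every} positive eigenvector corresponds to $r(K)$, so by itself it does not exclude $r(K)>1-\lambda_1$. The correct and elementary fix is the comparison lemma for positive operators: since $\varphi$ is an interior point of the cone, i.e.\ $\min_{\bar{\Omega}}\varphi>0$ (a fact from \cite{MR2009} that the paper's own choice of $M$ also relies on), any $v$ with $\|v\|_\infty\le1$ satisfies $|v|\le\varphi/\min_{\bar{\Omega}}\varphi$, hence $|K^n v|\le(1-\lambda_1)^n\varphi/\min_{\bar{\Omega}}\varphi$, giving $r(K)\le 1-\lambda_1$; the reverse inequality holds because $1-\lambda_1$ is an eigenvalue of $K$. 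With this substitution (which also renders your compactness verification unnecessary, since the comparison bound needs only positivity of $K$), your argument is complete.
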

\begin{proof}
Let $\varphi(x)$ be the corresponding eigenfunction associated to $\lambda_1$. Construct $\underline{u}(x)=\varepsilon\varphi(x)$ and $\overline{u}(x)=M\varphi(x)$ for some $\varepsilon>0$ with $M=1+\frac{\max_{\bar{\Omega}}\Lambda(x)}
{\lambda_1d_S\min_{\bar{\Omega}}\varphi(x)}$. Note that
\begin{equation*}
\begin{aligned}
&d_S\int_{\mathbb{R}^N}J(x-y)(\underline{u}(y)-\underline{u}(x))dy
+\Lambda(x)\\
=& -\lambda_1\varepsilon d_S\varphi(x)+\Lambda(x)\ge0
\end{aligned}
\end{equation*}
for sufficiently small $\varepsilon$. Meanwhile, by the definition of $M$, we have
\begin{equation*}
\begin{aligned}
&d_S\int_{\mathbb{R}^N}J(x-y)(\overline{u}(y)-\overline{u}(x))dy
+\Lambda(x)\\
=& -\lambda_1M d_S\varphi(x)+\Lambda(x)\le0.
\end{aligned}
\end{equation*}
Thus, the standard iterative method implies that \eqref{301} admits a positive solution $\underline{u}(x)\le u^*(x)\le\overline{u}(x)$. Thus, $u^*(x)$ satisfies
\begin{equation*}
u^*(x)=\int_{\Omega}J(x-y)u^*(y)dy+\frac{\Lambda(x)}{d_S}
\end{equation*}
and $u^*(x)\in C(\bar{\Omega})$.
Assume $\tilde{u}(x)$ is another positive solution of \eqref{301}. Let $w(x)=u^*(x)-\tilde{u}(x)$. Then, $w(x)$ satisfies
\begin{equation*}
\begin{cases}
\int_{\mathbb{R}^N}J(x-y)(w(y)-w(x))dy=0 &~~~\text{in}~\Omega,\\
w(x)=0 &~~~\text{on}~\mathbb{R}^N\backslash\Omega.
\end{cases}
\end{equation*}
According to Proposition 2.2 in \cite{AMRT2010}, we know $w(x)\equiv0$. That is $u^*(x)=\tilde{u}(x)$. This implies that the positive solution of \eqref{301} is unique. The proof is complete.
\end{proof}
\begin{corollary}
There is a unique disease-free equilibrium  $(S_*(x),0)$ of \eqref{101} and $S_*(x)\in C(\bar{\Omega})$.
\end{corollary}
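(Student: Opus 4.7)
The plan is to reduce the stationary problem for \eqref{101} in the case $I \equiv 0$ to the scalar problem \eqref{301} and then invoke Lemma \ref{lemma301}. By definition, a disease-free equilibrium is a steady state $(S_*(x), I_*(x))$ of \eqref{101} with $I_*(x) \equiv 0$. Inserting $I \equiv 0$ into the stationary version of \eqref{101}, and using the extension convention stated just after \eqref{101} that $\frac{\beta(x) S I}{S+I} = 0$ whenever $I = 0$, the second equation is automatically satisfied, while the first equation together with the exterior condition becomes
\begin{equation*}
\begin{cases}
d_S\int_{\mathbb{R}^N}J(x-y)(S_*(y)-S_*(x))dy+\Lambda(x)=0 & \text{in}~\Omega,\\
S_*(x)=0 & \text{on}~\mathbb{R}^N\backslash\Omega,
\end{cases}
\end{equation*}
which is exactly problem \eqref{301}.

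Once this reduction is noted, the existence, positivity, uniqueness and continuity of $S_*$ follow directly from Lemma \ref{lemma301}: taking $S_*(x) = u^*(x)$ yields a disease-free equilibrium with $S_* \in C(\bar{\Omega})$, and any other such equilibrium $(\tilde{S}, 0)$ would produce another positive solution of \eqref{301}, forcing $\tilde{S} = u^* = S_*$ by the uniqueness assertion of Lemma \ref{lemma301}.

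In short, there is essentially no new work beyond citing Lemma \ref{lemma301}; the only things to verify are that the exterior condition transfers to the steady state (immediate from the formulation of \eqref{101}) and that the coupling term $\beta(x)SI/(S+I)$ vanishes when $I=0$ (which is built into the model by the extension convention). Thus I do not expect any genuine obstacle in this step.
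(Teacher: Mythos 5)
Your proposal is correct and follows exactly the intended route: the paper states this corollary without proof precisely because it is the immediate reduction you describe, namely that a steady state with $I\equiv 0$ forces $S_*$ to solve \eqref{301}, whereupon existence, uniqueness, positivity and continuity are all supplied by Lemma \ref{lemma301}. The only point worth a passing remark is that uniqueness among \emph{nonnegative} equilibria also holds, since any nonnegative solution of \eqref{301} is automatically positive (at a zero of $S$ the equation would give $d_S\int_{\Omega}J(x-y)S(y)dy+\Lambda(x)=0$ with $\Lambda(x)>0$, a contradiction), so Lemma \ref{lemma301}'s uniqueness for positive solutions suffices.
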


Now, we have the following global stability result.

\begin{theorem}\label{theorem302}
If $R_0<1$, then the positive solutions of \eqref{101} converge to the disease-free equilibrium $(S_*(x),0)$ uniformly on $x$ as $t\to+\infty$.
\end{theorem}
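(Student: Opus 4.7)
The plan is two-stage: first show that the infected density decays uniformly exponentially, $\|I(\cdot,t)\|_{L^\infty(\Omega)}\to 0$, then use this to show $S(\cdot,t)\to S_*(\cdot)$ uniformly.

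For the decay of $I$, observe that $\beta(x)SI/(S+I)\le\beta(x)I$, so $I$ is a subsolution of the linear nonlocal problem $v_t=L[v]$ with $v(x,0)=I_0(x)$ and $v\equiv 0$ outside $\Omega$. By Corollary \ref{remark206}, $R_0<1$ is equivalent to $\mu_p(d_I)=S(L)<0$. Since $L$ may admit no principal eigenvalue, I adapt the approximation device from Proposition \ref{pro205}: replace $\gamma(x)$ by $\gamma_n(x)\le\gamma(x)$ which is constant in a small ball about a minimizer of $\gamma$. The modified operator $L_n$ then admits a genuine principal eigenpair $(\mu_p^n,\varphi_n)$ with $\varphi_n\in C(\bar\Omega)$ positive, and $\mu_p^n\to S(L)<0$ as $n\to+\infty$. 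Fix $n$ with $\mu_p^n\le-\delta<0$. Since $\gamma_n\le\gamma$,
\[
L[\varphi_n]=L_n[\varphi_n]+(\gamma_n-\gamma)\varphi_n\le\mu_p^n\varphi_n\le-\delta\varphi_n,
\]
so $w(x,t):=Me^{-\delta t}\varphi_n(x)$ satisfies $w_t\ge L[w]$. Taking $M$ large enough that $M\varphi_n\ge I_0$ on $\bar\Omega$ — possible since $\varphi_n$ is bounded below by a positive constant on the compact set $\bar\Omega$ — the comparison principle for $\partial_t-L$ gives $0\le I(x,t)\le Me^{-\delta t}\varphi_n(x)$, the desired exponential decay.

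For the second stage, regard the $S$-equation as an asymptotically autonomous perturbation: the source $h(x,t):=\gamma(x)I-\beta(x)SI/(S+I)$ satisfies $|h(x,t)|\le(\|\beta\|_\infty+\|\gamma\|_\infty)\|I(\cdot,t)\|_\infty\to 0$. Given $\epsilon>0$, pick $T$ so that $|h|\le\epsilon$ on $\Omega\times[T,\infty)$, and bracket $S$ between $\underline S_\epsilon$ and $\overline S_\epsilon$ solving
\[
u_t=d_S\int_{\mathbb{R}^N}J(x-y)(u(y,t)-u(x,t))dy+\Lambda(x)\pm\epsilon
\]
on $\Omega\times(T,\infty)$, with $u(\cdot,T)=S(\cdot,T)$ and zero Dirichlet data outside $\Omega$. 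A direct repetition of the sub/supersolution argument of Lemma \ref{lemma301} — combined with a monotone iteration in time — yields unique positive equilibria $S_*^\pm(\epsilon)$ of these auxiliary problems, shows their global attractivity, and gives $\|S_*^\pm(\epsilon)-S_*\|_\infty\to 0$ as $\epsilon\to 0^+$ by continuous dependence on the inhomogeneity. Thus $\limsup_{t\to\infty}\|S(\cdot,t)-S_*\|_\infty=O(\epsilon)$, and letting $\epsilon\to 0$ concludes the proof.

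The main obstacle is stage one: $L$ may lack a principal eigenvalue, so an exponentially decaying supersolution cannot be built directly from an eigenfunction of $L$. The resolution leans on the $\gamma$-approximation from Proposition \ref{pro205} together with the sign-preserving inequality $\gamma_n\le\gamma$, which converts any eigenfunction of $L_n$ into a supersolution for $L$. A secondary technical point is establishing the strict lower bound $\varphi_n\ge c_n>0$ on all of $\bar\Omega$ (rather than positivity only in the interior), which I extract from the integral form of the eigenvalue equation, the assumption $J(0)>0$, and connectedness of $\bar\Omega$.
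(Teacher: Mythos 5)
Your stage one contains a genuine gap at the step ``replace $\gamma(x)$ by $\gamma_n(x)\le\gamma(x)$ which is constant in a small ball about a minimizer of $\gamma$. The modified operator $L_n$ then admits a genuine principal eigenpair.'' The known sufficient criteria for existence of a principal eigenvalue of a nonlocal operator $u\mapsto d_I\int_\Omega J(x-y)u(y)dy+c(x)u(x)$ (Coville \cite{Coville2010}, and the other works the paper cites) concern the behavior of the \emph{full} zeroth-order coefficient $c(x)=-d_I+\beta(x)-\gamma_n(x)$ near its \emph{maximum}: roughly, existence can fail when $c$ is too sharply peaked there, and is guaranteed when $c$ is flat (e.g.\ locally constant) near a maximizer. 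Flattening $\gamma$ near a minimizer of $\gamma$ controls nothing about $\beta-\gamma_n$ near its maximum, which generically sits at a different point and remains exactly as peaked as $\beta-\gamma$ was; so $L_n$ may still have no principal eigenpair. The device of Proposition \ref{pro205} works there precisely because the operator is $A$, whose potential is $-d_I-\gamma(x)$, maximized exactly where $\gamma$ is minimized; it does not transfer to $L=A+F$. The paper's proof of Theorem \ref{theorem302} instead approximates the combined potential $m=\beta-\gamma$ directly by a sequence $m_n$ with $\|m_n-m\|_{L^\infty}\to0$ chosen so that the approximate problem has a principal eigenpair $(\mu_p^n(d_I),\varphi_n)$, and then absorbs the two-sided error via $\mu_p^n(d_I)\le\frac12\mu_p(d_I)-\|m_n-m\|_{L^\infty(\Omega)}$ in the supersolution $\overline{w}=le^{\frac13\mu_p(d_I)t}\varphi_n(x)$. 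Your one-sided variant is salvageable with a one-line fix: flatten $m$ (not $\gamma$) near a maximizer of $m$, taking $m_n\ge m$, so that $L[\varphi_n]\le L_n[\varphi_n]=\mu_p^n\varphi_n\le-\delta\varphi_n$ with $\mu_p^n\to\mu_p(d_I)<0$ by the $1$-Lipschitz dependence of $\mu_p$ on the potential. The rest of your stage one (subsolution comparison $\tilde I\le Me^{-\delta t}\varphi_n$, the positive lower bound for $\varphi_n$ on $\bar\Omega$ from the integral equation) matches the paper's structure and is fine.

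Your stage two is correct and genuinely different from the paper's. The paper sets $v=\tilde S-S_*$, runs an $L^2$ energy estimate $\frac{d}{dt}\int_\Omega v^2dx\le-2d_S\lambda_1\int_\Omega v^2dx+2c_0|\Omega|^{1/2}e^{ct}\left(\int_\Omega v^2dx\right)^{1/2}$ using the variational characterization of $\lambda_1$ from Lemma \ref{lemma302}, solves the resulting differential inequality (including the resonant case $c=a/2$), and then bootstraps to uniform convergence through the variation-of-constants formula and H\"{o}lder's inequality. You instead bracket $S$ between solutions of the linear problems with sources $\Lambda\pm\epsilon$, identify their equilibria via Lemma \ref{lemma301} (by linearity $S_*^{\pm}=S_*\pm\epsilon\zeta$ with $\zeta$ the solution for $\Lambda\equiv1$, so continuous dependence is immediate), and use global attractivity of the linear nonlocal Dirichlet equation; this is precisely the $\epsilon$-bracketing technique the paper itself deploys in Theorem \ref{theorem403}, and it needs the $L^\infty$ exponential decay of \cite[Theorem 2.5]{AMRT2010} for the homogeneous problem. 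Your route avoids the energy computation and the case analysis, at the cost of losing the explicit decay rate \eqref{306} --- which the statement of the theorem does not require.
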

\begin{proof}
Since $R_0<1$, we have $\mu_p(d_I)<0$ following from Theorem \ref{theorem206} and Corollary \ref{remark206}.
First, consider the linear problem
\begin{equation}\label{3033}
\frac{\partial w(x,t)}{\partial t}=d_I\int_{\mathbb{R}^N}J(x-y)
(w(y,t)-w(x,t))dy
+(\beta(x)-\gamma(x))w(x,t)
\end{equation}
with initial value $w(x,0)= I_0(x)$ for $x\in\Omega$, and
the boundary condition
$w(x,t)=0$ for $x\in\mathbb{R}^N\backslash\Omega, t>0$.
Then, the semigroup theory implies that \eqref{3033} admits a unique positive solution $w(x,t)$.
Let $m(x)=\beta(x)-\gamma(x)$. Since $m(x)\in C(\bar{\Omega})$, we can find a function sequence $\{m_n(x)\}_{n=1}^{\infty}$ with $\|m_n-m\|_{L^\infty}\to0$ as $n\to+\infty$ such that the eigenvalue problem
\begin{equation*}
d_I\left(\int_{\Omega}J(x-y)u(y)dy-u(x)\right)
+m_n(x)u(x)=\mu u(x) ~~~\text{in}~\Omega
\end{equation*}
admits a principal eigenpair $(\mu_p^n(d_I), \varphi_n(x))$. Meanwhile, there is $\mu_p^n(d_I)\to\mu_p(d_I)$ as $n\to+\infty$.
Taking $n$ large enough, provided $n\ge n_0>0$, we have
\begin{equation*}
\mu_p^n(d_I)\le\frac12\mu_p(d_I)-\|m_n-m\|_{L^\infty(\Omega)}.
\end{equation*}
Construct $\overline{w}(x,t)=le^{\frac13\mu_p(d_I)t}\varphi_n(x)$ and normalize $\varphi_n$ as $\|\varphi_n\|_{L^\infty(\Omega)}=1$. Then, the direct computation yields that
\begin{eqnarray*}
&& \frac{\partial\overline{w}(x,t)}{\partial t}-d_I\left(\int_{\Omega}J(x-y)\overline{w}(y,t)dy
-\overline{w}(x,t)\right)-(\beta(x)-\gamma(x))\overline{w}(x,t)\\
&=& \frac13\mu_p(d_I)le^{\frac13\mu_p(d_I)t}\varphi_n(x)
+le^{\frac13\mu_p(d_I)t}(m_n(x)-m(x))\varphi_n(x)\\
&&-le^{\frac13\mu_p(d_I)t}\left[d_I\left(\int_{\Omega}J(x-y)\varphi_n(y)dy
-\varphi_n(x)\right)+m_n(x)\varphi_n(x)\right]\\
&\ge& \left[-\frac16\mu_p(d_I)+\|m_n-m\|_{L^\infty(\Omega)}+(m_n(x)-m(x))\right]
le^{\frac13\mu_p(d_I)t}\varphi_n(x)\ge0,
\end{eqnarray*}
provided $n\ge n_0$. Now, taking $l$ large enough such that $l\varphi_n(x)\ge I_0(x)$, we have $w(x,t)\le\overline{w}(x,t)$ for all $x\in\Omega$ and $t>0$ by comparison principle. Note that \begin{equation*}
\begin{aligned}
& \frac{\partial w(x,t)}{\partial t}-d_I\left(\int_{\Omega}J(x-y)w(y,t)dy
-w(x,t)\right)-\frac{\beta(x)w(x,t)\tilde{S}(x,t)}{w(x,t)+\tilde{S}(x,t)}
+\gamma(x)w(x,t)\\
\ge& \frac{\partial w(x,t)}{\partial t}-d_I\left(\int_{\Omega}J(x-y)w(y,t)dy
-w(x,t)\right)-(\beta(x)-\gamma(x))w(x,t)=0.
\end{aligned}
\end{equation*}
Thus, $\tilde{I}(x,t)\le w(x,t)$ for all $x\in\Omega$ and $t\ge0$. It then follows that
\begin{equation*}
\tilde{I}(x,t)\to0~~\text{uniformly on}~\bar{\Omega}~\text{as}~t\to+\infty.
\end{equation*}

Next, we want to show that $\tilde{S}(x,t)\to S_*(x)$ uniformly on $\bar{\Omega}$ as $t\to+\infty$. Set $v(x,t)=\tilde{S}(x,t)-S_*(x)$. Then, there is
\begin{equation}\label{303}
\frac{\partial v(x,t)}{\partial t}=d_S\left(\int_{\Omega}J(x-y)v(y,t)dy-v(x,t)\right)
+\gamma(x)\tilde{I}
-\frac{\beta(x)\tilde{S}\tilde{I}}{\tilde{S}+\tilde{I}},~~x\in\Omega.
\end{equation}
Since $\beta(x),\gamma(x)\in C(\bar{\Omega})$, by the above argument, there exists some constant $c_0>0$ such that
\begin{equation*}
\left|\gamma\tilde{I}
-\frac{\beta\tilde{S}\tilde{I}}{\tilde{S}+\tilde{I}}\right|\le c_0e^{\frac13\mu_p(d_I)t}.
\end{equation*}
Let $W(t)=\int_{\Omega}v^2(x,t)dx$. Then, we have
\begin{eqnarray*}
\frac{dW(t)}{dt}&=& 2\int_{\Omega}v(x,t)\frac{\partial v(x,t)}{\partial t}dx\\
&=& 2\int_{\Omega}v(x,t)\left[d_S\left(\int_{\Omega}J(x-y)v(y,t)dy-v(x,t)\right)
+\gamma(x)\tilde{I}
-\frac{\beta(x)\tilde{S}\tilde{I}}{\tilde{S}+\tilde{I}}\right]dx\\
&=& 2d_S\left(\int_{\Omega}\int_{\Omega}J(x-y)v(y,t)v(x,t)dydx
-\int_{\Omega}v^2(x,t)dx\right)\\
&& +2\int_{\Omega}\left(\gamma(x)\tilde{I}
-\frac{\beta(x)\tilde{S}\tilde{I}}{\tilde{S}
+\tilde{I}}\right)v(x,t)dx\\
&\le& -2d_S\lambda_1W(t)+2c_0|\Omega|^{\frac12}
e^{\frac13\mu_p(d_I)t}W^{\frac12}(t).
\end{eqnarray*}
Denote $a=-2d_S\lambda_1$, $b=2c_0|\Omega|^{\frac12}$ and $c=\frac13\mu_p(d_I)$. Then, the direct calculation yields that
\begin{equation*}
W(t)\le
\begin{cases}
(l_1+2bt)^2e^{at}~~&\text{if}~c=\frac a2,\\
\left(\frac{2b}{c-\frac a2}e^{ct}+l_2e^{\frac a2t}\right)^2~~&\text{if}~c\neq\frac a2,
\end{cases}
\end{equation*}
in which $l_1=W^{\frac 12}(0)$ if $c=\frac a2$ and $l_2=W^{\frac 12}(0)-\frac{2b}{c-\frac a2}$ if $c\neq\frac a2$. Thus, we have
\begin{equation*}
\|v(\cdot,t)\|_{L^2(\Omega)}\le
\begin{cases}
(k_1+k_2t)e^{\frac a2t}~~&\text{if}~c=\frac a2,\\
k_3e^{ct}+k_4e^{\frac a2t}~~&\text{if}~c\neq\frac a2
\end{cases}
\end{equation*}
for some positive constants $k_i (i=1,2,3,4)$.
It follows from \eqref{303} that
\begin{equation*}
v(x,t)=v_0(x)e^{-d_St}+e^{-d_St}\int_0^te^{d_Ss}
\left(d_S\int_{\Omega}J(x-y)v(y,s)dy+\gamma\tilde{I}
-\frac{\beta\tilde{S}\tilde{I}}{\tilde{S}
+\tilde{I}}\right)ds.
\end{equation*}
By H\"{o}lder inequality, we have
\begin{equation*}
\int_{\Omega}J(x-y)v(y,s)dy\le c\|v(\cdot,s)\|_{L^2(\Omega)}
\end{equation*}
for some positive constant $c$. Thus, there is
\begin{equation}\label{306}
|v(x,t)|\le
\begin{cases}
\tilde{k}_1e^{-d_St}+(\tilde{k}_2t+\tilde{k}_3)e^{\frac a2t}+\tilde{k}_4e^{ct}
~~&\text{if}~c=\frac a2,\\
\tilde{k}_5e^{-d_St}+\tilde{k}_6e^{\frac a2t}+\tilde{k}_7e^{ct}
~~&\text{if}~c\neq\frac a2,
\end{cases}
\end{equation}
where $\tilde{k}_i$ $(i=1,\cdot\cdot\cdot,7)$ are some positive constants. Since $v(x,t)\in C(\bar{\Omega}\times(0,\infty))$, we obtain that
\begin{equation*}
v(x,t)\to0~~\text{uniformly on}~\bar{\Omega}~\text{as}~t\to+\infty.
\end{equation*}
We complete the proof.
\end{proof}

\section{The endemic equilibrium}
\noindent

In this section, we consider the existence, uniqueness and global stability of the endemic equilibrium of \eqref{101}. To these goals, we study the steady state problem of \eqref{101}
\begin{equation}\label{401}
\begin{cases}
d_S\int_{\mathbb{R}^N}J(x-y)(S(y)-S(x))dy=\frac{\beta(x)SI}{S+I}
-\gamma(x)I-\Lambda(x), &x\in\Omega,\\
d_I\int_{\mathbb{R}^N}J(x-y)(I(y)-I(x))dy=-\frac{\beta(x)SI}{S+I}
+\gamma(x)I, &x\in\Omega,\\
I(x)=S(x)=0, &x\in\mathbb{R}^N\backslash\Omega.
\end{cases}
\end{equation}
By adding the two equations of \eqref{401}, we obtain the equivalent system
\begin{equation}\label{402}
\begin{cases}
\int_{\mathbb{R}^N}J(x-y)[(d_SS(y)+d_II(y))
-(d_SS(x)+d_II(x))]dy+\Lambda(x)=0, &x\in\Omega,\\
d_I\int_{\mathbb{R}^N}J(x-y)(I(y)-I(x))dy=-\frac{\beta(x)SI}{S+I}
+\gamma(x)I, &x\in\Omega,\\
I(x)=S(x)=0, &x\in\mathbb{R}^N\backslash\Omega.
\end{cases}
\end{equation}
Combining \eqref{301} and the first equation of \eqref{402}, we can get $d_SS(x)+d_II(x)=d_SS_*(x)$ for $x\in\Omega$. This yields that
\begin{equation*}
S(x)=\frac{d_SS_*(x)-d_II(x)}{d_S}.
\end{equation*}
Substituting this $S(x)$ into the second equation of \eqref{402}, we get
\begin{equation}\label{403}
d_I\left(\int_{\mathbb{R}^N}J(x-y)I(y)dy-I(x)\right)+\left[
\beta(x)-\gamma(x)-\frac{d_S\beta(x)I}
{d_SS_*(x)+(d_S-d_I)I}\right]I=0
\end{equation}
for $x\in\Omega$ and $I(x)=0$ for $x\in\mathbb{R}^N\backslash\Omega$.
\begin{theorem}\label{theorem401}
Assume $R_0>1$. Then, there is a unique positive solution $(S^*,I^*)$ of \eqref{401} satisfying $S^*,I^*\in C(\bar{\Omega})$ and $S^*>0, I^*>0$. Moreover, $0<I^*(x)<\frac{d_S}{d_I}S_*(x)$ for $x\in\Omega$.
\end{theorem}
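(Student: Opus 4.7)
The plan is to exploit the scalar reduction \eqref{403} derived in the excerpt: it suffices to produce a unique $I^*\in C(\overline{\Omega})$ with $0<I^*(x)<(d_S/d_I)S_*(x)$ in $\Omega$, since then $S^*(x):=(d_SS_*(x)-d_II^*(x))/d_S$ automatically lies in $C(\overline{\Omega})$ and is positive in $\Omega$, and $(S^*,I^*)$ solves \eqref{401}. I will obtain existence by constructing a sub/super-solution pair for \eqref{403} and running a monotone iteration. The natural supersolution is $\overline{I}(x)=(d_S/d_I)S_*(x)$: substituting into the nonlinear coefficient $c(x,I):=\beta(x)-\gamma(x)-d_S\beta(x)I/(d_SS_*+(d_S-d_I)I)$ gives $c(x,\overline{I})=-\gamma(x)$ after a direct algebraic simplification, while the nonlocal part equals $d_S(J\ast S_*-S_*)=-\Lambda(x)$ by \eqref{301}, so the full left-hand side of \eqref{403} evaluated at $\overline{I}$ is $-\Lambda(x)-\gamma(x)\overline{I}(x)<0$, a strict supersolution.

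For the subsolution I reuse the approximation device from the proof of Proposition~\ref{pro205}: since $L$ may lack a principal eigenvalue, perturb $m(x):=\beta(x)-\gamma(x)$ by continuous $m_n$ with $\|m_n-m\|_{L^\infty}\to0$ so that the shifted operator $d_I(J\ast\cdot-\cdot)+m_n$ has a principal eigenpair $(\mu_p^n,\varphi_n)$ with $\varphi_n>0$ on $\overline{\Omega}$ and $\mu_p^n\to\mu_p(d_I)$. Because $R_0>1$, Corollary~\ref{remark206} gives $\mu_p(d_I)>0$, hence $\mu_p^n>0$ for all large $n$. Setting $\underline{I}=\varepsilon\varphi_n$ and using the eigenvalue identity, the residual in \eqref{403} simplifies to $\varepsilon\varphi_n[\mu_p^n+(m-m_n)]-O(\varepsilon^2)$, which is strictly positive once $n$ is large (so that $\mu_p^n>\|m-m_n\|_{L^\infty}$) and $\varepsilon$ is sufficiently small; shrinking $\varepsilon$ further ensures $\underline{I}\le\overline{I}$. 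A standard monotone iteration for nonlocal equations—converting \eqref{403} into a fixed-point equation $KI=d_IJ\ast I+(K-d_I+c(x,I))I$ with $K$ large enough that the right-hand side is pointwise monotone in $I$—then produces a continuous $I^*$ with $\underline{I}\le I^*\le\overline{I}$. The strict inequality $I^*<\overline{I}$ in $\Omega$ follows from a nonlocal touching argument: if $I^*(x_0)=\overline{I}(x_0)$ at some $x_0$, subtracting the equation for $I^*$ from the strict supersolution inequality for $\overline{I}$ forces $d_I\int J(x_0-y)(I^*(y)-\overline{I}(y))\,dy>0$, contradicting $I^*\le\overline{I}$; the same nonlocal strong-positivity argument (using $J(0)>0$ and connectedness of $\Omega$) shows any nontrivial nonnegative solution is strictly positive on $\overline{\Omega}$.

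For uniqueness I use a scaling/sliding argument. Given two positive solutions $I_1,I_2$, define $\tau^*=\sup\{\tau>0:\tau I_1\le I_2\text{ on }\overline{\Omega}\}$; since $I_1$ is bounded below by a positive constant on the compact set $\overline{\Omega}$, the supremum is attained with $\min_{\overline{\Omega}}(I_2-\tau^*I_1)=0$ at some $x_0$. A direct computation gives $\partial_Ic(x,I)=-d_S^2\beta(x)S_*(x)/(d_SS_*+(d_S-d_I)I)^2<0$, so $c$ is strictly decreasing in $I$. Subtracting the equations for $I_2$ and for $\tau^*I_1$ at $x_0$ and using $I_2(x_0)=\tau^*I_1(x_0)$ yields
\begin{equation*}
d_I\int_{\mathbb{R}^N}J(x_0-y)\bigl(I_2(y)-\tau^*I_1(y)\bigr)\,dy=\tau^*I_1(x_0)\bigl[c(x_0,I_1(x_0))-c(x_0,\tau^*I_1(x_0))\bigr].
\end{equation*}
If $\tau^*<1$, the right-hand side is strictly negative while the left-hand side is nonnegative, a contradiction; hence $\tau^*\ge1$, and by symmetry $I_1\equiv I_2$. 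The step I expect to be most delicate is the subsolution construction via $(m_n,\varphi_n)$: one must verify that the approximate eigenfunctions remain continuous and bounded below by a positive constant on $\overline{\Omega}$ uniformly in $n$, so that $\varepsilon$ can be controlled against the (a priori small) positivity of $\mu_p^n$ while maintaining $\underline{I}\le\overline{I}$ in the iteration. This is precisely the auxiliary-problem route alluded to in the introduction as the way around the lack of regularization for $L$.
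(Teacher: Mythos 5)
Your construction follows essentially the same route as the paper's own proof: the same supersolution $\overline{I}=\frac{d_S}{d_I}S_*$ (with the identical computation reducing its residual to $-\Lambda(x)-\gamma(x)\overline{I}<0$ via \eqref{301}), the same subsolution $\varepsilon\varphi_n$ built from the perturbed eigenpairs $(\mu_p^n,\varphi_n)$ of the $m_n$-approximated operator, and the same sliding argument for uniqueness based on the strict monotonicity of $c(x,\cdot)$. Your version of the sliding step is in fact marginally cleaner than the paper's: comparing $c(x_0,I_1(x_0))$ with $c(x_0,\tau^*I_1(x_0))$ at the touching point gives an immediate sign contradiction when $\tau^*<1$, whereas the paper must first deduce $\tau^*I_2\equiv I_1$ from $\int_\Omega J(x_*-y)w(y)\,dy=0$ (a step that itself needs a chaining argument over the support of $J$) before reaching its contradiction. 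Your touching-point proof of the strict bound $I^*<\frac{d_S}{d_I}S_*$ is also fine and equivalent to the paper's evaluation of the second equation of \eqref{401} at a point where $S^*$ vanishes.

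The genuine gap is the sentence asserting that the monotone iteration ``produces a continuous $I^*$.'' Each iterate is continuous and the scheme is monotone, but the nonlocal operator has no smoothing effect, so the pointwise limit of the decreasing scheme started at $\overline{I}$ is a priori only upper semicontinuous: there is no compactness with which to pass continuity to the limit, and Dini's theorem cannot be invoked without already knowing the limit is continuous. This is precisely the ``lack of regularizing'' difficulty the introduction warns about, and the paper closes it with a separate step you omit: setting $F(x,u)=d_I\int_\Omega J(x-y)I^*(y)\,dy-d_Iu+f(x,u)$ with $f$ as in \eqref{d}, one has $F(x,I^*(x))=0$ and, using the equation itself,
\begin{equation*}
F_u(x,I^*)=-d_I\int_\Omega J(x-y)\frac{I^*(y)}{I^*(x)}\,dy-\frac{d_S^2S_*(x)\beta(x)I^*(x)}{[d_SS_*(x)+(d_S-d_I)I^*(x)]^2}<0,
\end{equation*}
so the Implicit Function Theorem yields $I^*\in C(\bar\Omega)$, and then $S^*\in C(\bar\Omega)$. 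Without this (or some substitute) your uniqueness step also loses its footing, since the sliding argument requires $I_1,I_2$ continuous and bounded below by a positive constant on the compact set $\bar\Omega$ to guarantee $\tau^*$ is well defined and attained. Relatedly, the delicacy you flag at the end --- uniform-in-$n$ control of $\varphi_n$ --- is not actually needed: one fixes a single $n\ge n_0$ with $\mu_p^n>\|m_n-m\|_{L^\infty(\Omega)}$ once and for all, and only $\varepsilon$ varies thereafter; the genuinely delicate point is the continuity of the limit just described.
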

\begin{proof}
Since $R_0>1$, we have $\mu_p(d_I)>0$ according to Corollary \ref{remark206}. Let $m(x)=\beta(x)-\gamma(x)$. By the same arguments as in Theorem \ref{theorem302}, we can find a function sequence $\{m_n(x)\}_{n=1}^{\infty}$ such that $\|m_n-m\|_{L^\infty(\Omega)}\to0$ as $n\to+\infty$ and the eigenvalue problem
\begin{equation*}
\begin{cases}
d_I\int_{\mathbb{R}^N}J(x-y)(\varphi(y)-\varphi(x))dy+m_n(x)\varphi(x)=\mu \varphi(x)~~~&\text{in}~\Omega,\\
\varphi(x)=0~~~&\text{on}~\mathbb{R}^N\backslash\Omega
\end{cases}
\end{equation*}
admits a principal eigenpair $(\mu_p^n(d_I), \varphi_n(x))$, and $\mu_p^n(d_I)$ satisfies
\begin{equation*}
\mu_p^n(d_I)\ge \frac12\mu_p(d_I)+\|m_n-m\|_{L^\infty(\Omega)},
\end{equation*}
provided $n\ge n_0$ for large enough $n_0$. Normalize $\varphi_n$ by $\|\varphi_n\|_{L^\infty(\Omega)}=1$ and construct $\underline{I}(x)=\varepsilon\varphi_n(x)$ for $\varepsilon>0$, $\overline{I}(x)=\frac{d_S}{d_I}S_*(x)$. Then, $\underline{I}(x)$ satisfies
\begin{eqnarray*}
&& d_I\left(\int_{\Omega}J(x-y)\underline{I}(y)dy-\underline{I}(x)\right)
+(\beta(x)-\gamma(x))\underline{I}
-\frac{d_S\beta(x)\underline{I}^2}{d_SS_*(x)+(d_S-d_I)\underline{I}}\\
&=& \mu_p^n(d_I)\varepsilon\varphi_n+(m(x)-m_n(x))\varepsilon\varphi_n-
\frac{d_S\beta(x)\varepsilon^2\varphi_n^2}{d_SS_*(x)+(d_S-d_I)
\varepsilon\varphi_n}\\
&\ge& \frac12\mu_p(d_I)\varepsilon\varphi_n-
\frac{d_S\beta(x)\varepsilon^2\varphi_n^2}{d_SS_*(x)+(d_S-d_I)
\varepsilon\varphi_n}\ge0
\end{eqnarray*}
for sufficiently small $\varepsilon$ and $\overline{I}(x)$ satisfies
\begin{eqnarray*}
&& d_I\left(\int_{\Omega}J(x-y)\overline{I}(y)dy-\overline{I}(x)\right)
+(\beta(x)-\gamma(x))\overline{I}
-\frac{d_S\beta(x)\overline{I}^2}{d_SS_*(x)+(d_S-d_I)\overline{I}}\\
&=& -\Lambda(x)-\gamma(x)\overline{I}<0.
\end{eqnarray*}
Now, choose $\varepsilon$ small enough such that $\varepsilon\varphi_n(x)\le\frac{d_S}{d_I}S_*(x)$. Then, the comparison principle implies that \eqref{403} admits a positive solution $I^*(x)$ satisfying $\varepsilon\varphi_n(x)\le I^*(x)\le\frac{d_S}{d_I}S_*(x)$. Meanwhile, denote
\begin{equation*}
S^*(x):=\frac{d_SS_*(x)-d_II^*(x)}{d_S}.
\end{equation*}
Then, $(S^*(x), I^*(x))$ is a pair of nonnegative solution of \eqref{401}.

Further, we claim that $S^*(x)>0$ and $I^*(x)<\frac{d_S}{d_I}S_*(x)$. Otherwise, $I^*(x_0)=\frac{d_S}{d_I}S_*(x_0)$ for some $x_0\in Int(\Omega)$. It is immediately that $S^*(x_0)=0$. Then, by the second equation of \eqref{401}, we obtain that
\begin{equation*}
0\ge d_I\left(\int_{\Omega}J(x-y)I(y)dy-I(x_0)
\right)=\gamma(x_0)I(x_0)>0,
\end{equation*}
which is a contradiction. Thus, $I^*(x)<\frac{d_I}{d_S}S_*(x)$ for all $x\in Int(\Omega)$. If $I^*(x)\in C(\bar{\Omega})$, we can choose a point sequence $\{x_n\}\subset Int(\Omega)$ such that $x_n\to x_0$ for $x_0\in\partial\Omega$, then the same discussion can obtain that $I^*(x)<\frac{d_I}{d_S}S_*(x)$ for all $x\in\partial\Omega$. Meanwhile, we have $S^*(x)>0$. In fact, it can be shown that $S^*(x),I^*(x)\in C(\bar{\Omega})$. Define a functional as follows
\begin{equation*}
F(x,u)=d_I\int_{\Omega}J(x-y)I^*(y)dy-d_Iu+f(x,u),
\end{equation*}
in which
\begin{equation}\label{d}
f(x,u)=\left[
\beta(x)-\gamma(x)-\frac{d_S\beta(x)u}
{d_SS_*(x)+(d_S-d_I)u}\right]u.
\end{equation}
Hence, we have
\begin{equation*}
F_u(x,u)=-d_I+\beta(x)-\gamma(x)-\frac{d_S\beta(x)u}
{d_SS_*(x)+(d_S-d_I)u}-\frac{d_S^2S_*(x)\beta(x)u}
{[d_SS_*(x)+(d_S-d_I)u]^2}.
\end{equation*}
Obviously, $F(x,I^*)=0$ and
\begin{equation*}
F_u(x,I^*)=-d_I\int_{\Omega}J(x-y)\frac{I^*(y)}{I^*(x)}dy-\frac{d_S^2S_*(x)\beta(x)I^*(x)}
{[d_SS_*(x)+(d_S-d_I)I^*(x)]^2}<0.
\end{equation*}
Consequently, the Implicit Function Theorem implies that $I^*(x)\in C(\bar{\Omega})$. By the relation between $S^*(x)$ and $I^*(x)$, it is obvious that $S^*(x)\in C(\bar{\Omega})$.

Next, we intend to prove the uniqueness of positive solutions of \eqref{401} by the method like in \cite{Be2005}. On the contrary, assume that both $I_1(x)$ and $I_2(x)$ are positive solutions of \eqref{401}. Define
\begin{equation*}
\tau^*=\inf\{x\in\bar{\Omega}|~I_1(x)\le\tau^*I_2(x)\}.
\end{equation*}
Note that, by the boundedness of $I_1(x)$ and $I_2(x)$, $\tau^*$ is well-defined. We claim that $\tau^*\le1$. Otherwise, $\tau^*>1$. Then, we have
\begin{eqnarray}\label{404}
&&d_I\left(\int_{\Omega}J(x-y)\tau^*I_2(y)dy-\tau^*I_2(x)\right)\\
&&+
\left[\beta(x)-\gamma(x)-\frac{d_S\beta(x)\tau^*I_2}
{d_SS_*+(d_S-d_I)\tau^*I_2}\right]\tau^*I_2 \nonumber\\
&=& \left
[\frac{d_S\beta(x)I_2}{d_SS_*+(d_S-d_I)I_2}
-\frac{d_S\beta(x)\tau^*I_2}{d_SS_*+(d_S-d_I)\tau^*I_2}
\right]\tau^*I_2<0.
\end{eqnarray}
By the definition of $\tau^*$, there is some $x_*\in\bar{\Omega}$ such that $I_1(x_*)=\tau^*I_2(x_*)$. Thus, the direct computation yields that
\begin{eqnarray}\label{405}
&&d_I\left(\int_{\Omega}J(x_*-y)\tau^*I_2(y)dy-\tau^*I_2(x_*)\right)
\nonumber\\
&&+\left[\beta(x_*)-\gamma(x_*)-\frac{d_S\beta(x_*)\tau^*I_2(x_*)}
{d_SS_*+(d_S-d_I)\tau^*I_2(x_*)}\right]\tau^*I_2(x_*)\\
&=& d_I\int_{\Omega}J(x_*-y)(\tau^*I_2(y)-I_1(y))dy\ge 0.\nonumber
\end{eqnarray}
Now, set $w(x)=\tau^*I_2(x)-I_1(x)$.
Combining \eqref{404} and \eqref{405}, we get $d_I\int_{\Omega}J(x_*-y)w(y)dy=0$, which implies that $w(y)\equiv0$ for $y\in\Omega$. That is, $I_1(x)=\tau^*I_2(x)$ for all $x\in\Omega$. Thus,
\begin{equation*}
\begin{aligned}
0&=d_I\left(\int_{\Omega}J(x-y)I_1(y)dy-I_1(x)\right)+f(x,I_1(x))\\
&=f(x,\tau^*I_2(x))-\tau^*f(x,I_2(x))<0,
\end{aligned}
\end{equation*}
where $f(x,u)$ is defined as \eqref{d}. This is a contradiction. Hence, we have $I_1(x)\le I_2(x)$ for all $x\in\Omega$. On the other hand, the same arguments as above can obtain that $I_1(x)\ge I_2(x)$. Thus, $I_1(x)=I_2(x)$. The uniqueness of positive solutions of \eqref{403} is obtained. And this implies that the positive solutions of \eqref{401} are unique. We finish the proof.
\end{proof}

Now, we give the global stability of positive stationary solutions of \eqref{101} in the sense of Definition \ref{def2}. Before this, a nonlocal dispersal Dirichlet problem is involved. That is, we first consider the problem
\begin{equation}\label{407}
\begin{cases}
\frac{\partial u}{\partial t}=\int_{\mathbb{R}^N}J(x-y)
(u(y,t)-u(x,t))dy+b(x)u-a(x)u^2,~&x\in\Omega, t>0,\\
u(x,0)=u_0(x),~&x\in\Omega,\\
u(x,t)=0,~&x\in\mathbb{R}^N\backslash\Omega, t>0,
\end{cases}
\end{equation}
where $u_0(x)$ is a bounded continuous function. Define
\begin{equation*}
\lambda_p=\inf_{\scriptstyle \varphi\in L^2(\Omega) \atop\scriptstyle
\varphi\neq0}\frac{\int_{\Omega}\varphi^2(x)dx-\int_{\Omega}\int_{\Omega}
J(x-y)\varphi(y)\varphi(x)dydx-\int_{\Omega}b(x)
\varphi^2(x)dx}{\int_{\Omega}\varphi^2(x)dx}.
\end{equation*}
Then, we have the following result.
\begin{lemma}\label{lemma402}
Assume $b(x),a(x)\in C(\bar{\Omega})$ and $a(x)>0$. If $\lambda_p<0$, then \eqref{407} admits a unique positive stationary solution $\tilde{u}(x)\in C(\bar{\Omega})$. Moreover, $\tilde{u}(x)$ is globally asymptotically stable.
\end{lemma}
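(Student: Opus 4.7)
The plan is to follow the blueprint used earlier for the disease-free and endemic equilibria: construct the stationary solution by monotone iteration, prove uniqueness by a scaling/touching argument exploiting the concavity of the logistic nonlinearity $b(x)u-a(x)u^2$, and derive global attractivity by sandwiching the solution between the monotone trajectories emanating from a stationary subsolution and a stationary supersolution.

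For existence, I would first rewrite the stationary version of \eqref{407}, using $u\equiv 0$ outside $\Omega$, as
\begin{equation*}
\int_{\Omega}J(x-y)u(y)\,dy-u(x)+b(x)u(x)-a(x)u^{2}(x)=0,\quad x\in\Omega.
\end{equation*}
Since $\int_{\Omega}J(x-y)\,dy\le 1$ and $a_{0}:=\min_{\bar\Omega}a>0$, the constant $\overline{u}\equiv M$ is a supersolution provided $a_{0}M>\|b\|_{\infty}$. For a subsolution I would mimic the construction in Proposition \ref{pro205} and Theorem \ref{theorem302}: since $\lambda_{p}<0$, the operator $L_{0}\varphi:=\int_{\Omega}J(x-y)\varphi(y)\,dy-\varphi+b(x)\varphi$ has spectral bound $-\lambda_{p}>0$, and approximating $b$ by a sequence $b_{n}\in C(\bar\Omega)$ with $\|b_{n}-b\|_{\infty}\to 0$ whose associated operators admit principal eigenpairs $(\mu_{p}^{n},\varphi_{n})$ with $\mu_{p}^{n}\to -\lambda_{p}$, one may take $n$ large so that $\mu_{p}^{n}\ge -\lambda_{p}/2>0$. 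Normalizing $\|\varphi_{n}\|_{\infty}=1$, the function $\underline{u}:=\varepsilon\varphi_{n}$ satisfies the subsolution inequality for small $\varepsilon$, because the leading term $\mu_{p}^{n}\varepsilon\varphi_{n}$ dominates both the perturbation $(b-b_{n})\varepsilon\varphi_{n}$ and the quadratic $-a\varepsilon^{2}\varphi_{n}^{2}$. Choosing $\varepsilon$ further so that $\underline{u}\le\overline{u}$, standard monotone iteration yields a solution $\tilde{u}$ with $\underline{u}\le\tilde{u}\le\overline{u}$. Continuity of $\tilde{u}$ on $\bar\Omega$ follows from the fixed-point identity $\tilde{u}(x)=\int_{\Omega}J(x-y)\tilde{u}(y)\,dy+b(x)\tilde{u}(x)-a(x)\tilde{u}^{2}(x)$, whose right-hand side is continuous in $x$.

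For uniqueness I would adapt the scaling argument from Theorem \ref{theorem401}. Given two positive solutions $u_{1}$ and $u_{2}$, set $\tau^{*}=\inf\{\tau>0:u_{1}\le\tau u_{2}\text{ on }\bar\Omega\}$; this is finite because both solutions are bounded and are bounded away from $0$ on $\bar\Omega$. If $\tau^{*}>1$, there is a touching point $x_{*}\in\bar\Omega$ with $u_{1}(x_{*})=\tau^{*}u_{2}(x_{*})$. Multiplying the $u_{2}$-equation by $\tau^{*}$ and using the strict concavity of $s\mapsto b(x)s-a(x)s^{2}$ on $(0,\infty)$, subtracting the $u_{1}$-equation forces $\int_{\Omega}J(x_{*}-y)(\tau^{*}u_{2}(y)-u_{1}(y))\,dy<0$, contradicting the fact that the integrand is nonnegative by the definition of $\tau^{*}$. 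Hence $\tau^{*}\le 1$, so $u_{1}\le u_{2}$, and by symmetry $u_{1}=u_{2}$.

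For global asymptotic stability, any positive initial datum $u_{0}\in C(\bar\Omega)$ with $u_{0}\not\equiv 0$ yields $u(\cdot,t_{0})>0$ on $\bar\Omega$ for some small $t_{0}>0$, via the integral representation of the semigroup together with $J(0)>0$. Hence $\varepsilon\varphi_{n}\le u(\cdot,t_{0})\le M$ for suitable $\varepsilon$ and $M$. Denoting by $\underline{U}(x,t)$ and $\overline{U}(x,t)$ the solutions of \eqref{407} starting from these stationary sub- and supersolutions, the comparison principle forces $\underline{U}(\cdot,t)$ to be nondecreasing and $\overline{U}(\cdot,t)$ to be nonincreasing in $t$, and each converges pointwise to a positive stationary solution that must equal $\tilde{u}$ by uniqueness. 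Sandwiching $u(\cdot,t+t_{0})$ between these monotone trajectories and applying Dini's theorem on the compact set $\bar\Omega$ gives uniform convergence to $\tilde u$. The main obstacle, as elsewhere in the paper, is the absence of a genuine principal eigenvalue for $L_{0}$ and of parabolic smoothing; the approximation device from Proposition \ref{pro205} supplies a workable strict subsolution, while the touching-point step in the uniqueness proof relies on the nonlocal strong-positivity property ``$\int_{\Omega}J(x_{*}-y)w(y)\,dy=0$ with $w\ge 0$ implies $w\equiv 0$ on the $J$-connected component of $x_{*}$,'' which exhausts $\Omega$ thanks to $J(0)>0$ and connectedness.
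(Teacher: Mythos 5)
You should first note a structural point: the paper does not actually prove Lemma \ref{lemma402} --- it defers entirely to \cite{Coville2010,SLW2015,MR20091}. Your proposal reconstructs precisely the standard argument of those references, and it is consistent with the toolkit the paper uses elsewhere: the approximation of the potential to manufacture a genuine principal eigenpair (as in Proposition \ref{pro205} and Theorems \ref{theorem302}, \ref{theorem401}), the constant supersolution and $\varepsilon\varphi_n$ subsolution with monotone iteration, the scaling/touching uniqueness argument (your version of $\tau^*=\inf\{\tau>0:\,u_1\le\tau u_2\}$ is in fact a cleaner rendering of the garbled definition in Theorem \ref{theorem401}), and attractivity via monotone trajectories from a stationary sub/supersolution pair squeezed by uniqueness. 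The identity $f(x,\tau s)-\tau f(x,s)=a(x)\tau s^2(1-\tau)<0$ for $\tau>1$, $s>0$ makes the touching-point step work exactly as you say, and your closing remark about needing $J(0)>0$ plus connectedness of $\Omega$ to propagate strict positivity (and hence to get the positive lower bound that makes $\tau^*$ finite) is a real hypothesis being used --- it is worth stating explicitly that ``domain'' means connected here.

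There is one genuine local flaw: your continuity step is circular as written. The ``fixed-point identity'' $\tilde u(x)=\int_\Omega J(x-y)\tilde u(y)\,dy+b(x)\tilde u(x)-a(x)\tilde u^2(x)$ has $\tilde u(x)$ on both sides, and its right-hand side is continuous only if $\tilde u$ already is, which is what you are trying to prove. The correct repair is exactly the device the paper uses for $I^*$ in Theorem \ref{theorem401}: rewrite the equation as $\tilde u(x)\bigl(1-b(x)+a(x)\tilde u(x)\bigr)=\int_\Omega J(x-y)\tilde u(y)\,dy$, note that the right-hand side is positive and continuous in $x$, so $1-b(x)+a(x)\tilde u(x)>0$ wherever $\tilde u>0$; then for $F(x,u):=\int_\Omega J(x-y)\tilde u(y)\,dy-u+b(x)u-a(x)u^2$ one gets $F_u(x,\tilde u(x))=-1+b(x)-2a(x)\tilde u(x)<-a(x)\tilde u(x)<0$, and the Implicit Function Theorem yields $\tilde u\in C(\bar\Omega)$. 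A second, smaller gap: in the stability argument, the statement that the monotone trajectories $\underline U,\overline U$ converge to \emph{stationary} solutions needs justification --- pass to the limit along the variation-of-constants (or integral) formulation via dominated convergence to identify the pointwise monotone limit as a solution of the stationary problem, and only then invoke uniqueness and Dini's theorem. With these two repairs your proof is complete and matches the route of the cited sources.
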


The proof of Lemma \ref{lemma402} can be found in \cite{Coville2010,SLW2015,MR20091}. We omit it here.

\begin{theorem}\label{theorem403}
Suppose $R_0>1$ and $d_S=d_I=d$. Then, the positive solutions of \eqref{101} converge to the endemic equilibrium $(S^*(x),I^*(x))$ uniformly on $x$ as $t\to+\infty$.
\end{theorem}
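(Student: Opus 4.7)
The hypothesis $d_S = d_I = d$ is what makes this tractable: adding the two equations of \eqref{101} linearises the total population. My plan is to first prove that $\tilde{S}+\tilde{I}$ converges exponentially to the disease-free profile $S_*(x)$, then substitute this back into the $\tilde{I}$-equation to reduce the problem to the scalar logistic nonlocal equation covered by Lemma \ref{lemma402}, and finally squeeze $\tilde{I}$ between two such solutions via an asymptotically autonomous argument.

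Step 1 (total population). Set $N(x,t):=\tilde{S}(x,t)+\tilde{I}(x,t)$. Adding the first two equations in \eqref{101} with $d_S=d_I=d$ kills every nonlinear term, giving
\begin{equation*}
\frac{\partial N}{\partial t}=d\int_{\mathbb{R}^N}J(x-y)(N(y,t)-N(x,t))dy+\Lambda(x),\qquad N=0\ \text{on}\ \mathbb{R}^N\setminus\Omega.
\end{equation*}
Recall from Theorem \ref{theorem401} (via the derivation leading to \eqref{403}) that the stationary problem for $N$ is exactly \eqref{301}, whose unique positive solution is $S_*(x)$. Thus $W:=N-S_*$ satisfies the homogeneous linear nonlocal diffusion equation on $\Omega$ with Dirichlet datum $0$, and by Lemma \ref{lemma302} the associated principal eigenvalue is $-d\lambda_1<0$. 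A standard energy estimate (analogous to the $W(t)$-estimate in the proof of Theorem \ref{theorem302}) yields $\|W(\cdot,t)\|_{L^\infty(\bar\Omega)}\le C e^{-d\lambda_1 t}$. Hence $N(x,t)\to S_*(x)$ uniformly in $\bar\Omega$, exponentially fast.

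Step 2 (reduction to a logistic problem). Writing $\tilde{S}=N-\tilde{I}$ turns the $\tilde{I}$-equation into
\begin{equation*}
\frac{\partial \tilde{I}}{\partial t}=d\int_{\mathbb{R}^N}J(x-y)(\tilde{I}(y,t)-\tilde{I}(x,t))dy+(\beta(x)-\gamma(x))\tilde{I}-\frac{\beta(x)\tilde{I}^2}{N(x,t)}.
\end{equation*}
By Step 1, for every $\varepsilon>0$ there is $T_\varepsilon>0$ such that $S_*(x)-\varepsilon\le N(x,t)\le S_*(x)+\varepsilon$ for all $x\in\bar\Omega$, $t\ge T_\varepsilon$. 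For $t\ge T_\varepsilon$ I therefore get the differential inequalities
\begin{equation*}
d\mathcal J[\tilde{I}]+(\beta-\gamma)\tilde{I}-\frac{\beta\tilde{I}^2}{S_*-\varepsilon}\le\frac{\partial\tilde{I}}{\partial t}\le d\mathcal J[\tilde{I}]+(\beta-\gamma)\tilde{I}-\frac{\beta\tilde{I}^2}{S_*+\varepsilon},
\end{equation*}
where $\mathcal J[u](x)=\int_{\mathbb{R}^N}J(x-y)(u(y)-u(x))dy$. Let $\underline{I}_\varepsilon$, $\overline{I}_\varepsilon$ denote the solutions of \eqref{407} with $b(x)=\beta(x)-\gamma(x)$ and $a(x)=\beta(x)/(S_*(x)\mp\varepsilon)$, each launched from initial datum $\tilde{I}(\cdot,T_\varepsilon)$ at time $T_\varepsilon$. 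The comparison principle for the nonlocal dispersal equation gives $\underline{I}_\varepsilon(x,t)\le\tilde{I}(x,t)\le\overline{I}_\varepsilon(x,t)$ for $t\ge T_\varepsilon$.

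Step 3 (applying Lemma \ref{lemma402} and passing $\varepsilon\to 0$). Because $R_0>1$, Corollary \ref{remark206} gives $\mu_p(d)>0$, which is exactly the assumption $\lambda_p<0$ needed in Lemma \ref{lemma402} for the unperturbed coefficient $a(x)=\beta(x)/S_*(x)$; by continuity of $\lambda_p$ in the data, the same sign persists for $a(x)=\beta(x)/(S_*(x)\pm\varepsilon)$ when $\varepsilon$ is small. Lemma \ref{lemma402} then produces unique positive stationary solutions $\underline{I}^\sharp_\varepsilon, \overline{I}^\sharp_\varepsilon\in C(\bar\Omega)$ that globally attract $\underline{I}_\varepsilon$ and $\overline{I}_\varepsilon$ respectively. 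Passing to the limit $t\to\infty$ yields
\begin{equation*}
\underline{I}^\sharp_\varepsilon(x)\le\liminf_{t\to\infty}\tilde{I}(x,t)\le\limsup_{t\to\infty}\tilde{I}(x,t)\le\overline{I}^\sharp_\varepsilon(x),
\end{equation*}
uniformly on $\bar\Omega$. Finally I will show $\underline{I}^\sharp_\varepsilon,\overline{I}^\sharp_\varepsilon\to I^*$ uniformly as $\varepsilon\to 0$: both are fixed points of a compact monotone map depending continuously on $\varepsilon$, so by the uniqueness part of Theorem \ref{theorem401} (adapted to the perturbed coefficient) and a sub/super-solution squeezing they must converge to the unique positive stationary solution $I^*$ of the limit equation \eqref{403}. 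This gives $\tilde{I}\to I^*$ uniformly, and then $\tilde{S}=N-\tilde{I}\to S_*-I^*=S^*$ uniformly, completing the proof.

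The main obstacle is Step 3: justifying the continuous dependence $\overline{I}^\sharp_\varepsilon,\underline{I}^\sharp_\varepsilon\to I^*$ as $\varepsilon\to 0$, which lacks the usual compactness supplied by elliptic regularity in the local case. I will circumvent this by mimicking the $\tau^*$-scaling argument used in Theorem \ref{theorem401} to get uniqueness, combined with the uniform $L^\infty$ bound $0<I\le \frac{d_S}{d_I}S_*=S_*$; the $\tau^*$-argument is stable under small perturbations of the coefficient $a(x)$ and yields the desired $\varepsilon\to 0$ convergence without any regularising mechanism.
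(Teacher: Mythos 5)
Your proposal is correct and follows the paper's proof almost step for step: the same decomposition $V=\tilde S+\tilde I$ (valid exactly because $d_S=d_I$), the same exponential convergence $V\to S_*$, the same squeeze of $\tilde I$ between the solutions of the two perturbed logistic problems \eqref{410}--\eqref{411}, and the same invocation of Lemma \ref{lemma402} via $\mu_p(d)>0$ from Corollary \ref{remark206}. The only genuine divergence is the final passage $\varepsilon\to0$, which you rightly identify as the crux. The paper handles it by proving that $\underline I_\varepsilon$ and $\overline I_\varepsilon$ are monotone in $\varepsilon$ and uniformly pinched between $\delta_0>0$ and $MS_*$, extracting monotone limits that solve \eqref{412}, and concluding by uniqueness of positive solutions of \eqref{412}; your alternative via the $\tau^*$-scaling of Theorem \ref{theorem401} also works and can even be made quantitative: since $c:=\min_{\bar\Omega}S_*>0$, the function $I^*$ is a subsolution and $(1+\varepsilon/c)I^*$ a supersolution of the stationary problem with carrying capacity $S_*+\varepsilon$, so the uniqueness in Lemma \ref{lemma402} pinches $I^*\le\overline I_\varepsilon\le(1+\varepsilon/c)I^*$ (and symmetrically for $\underline I_\varepsilon$), yielding uniform convergence with an explicit rate and no limit-extraction at all. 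Two small slips to fix: the solution map here is \emph{not} compact (that is precisely the nonlocal difficulty you then correctly route around, so delete the ``compact monotone map'' clause rather than lean on it); and the threshold $\lambda_p$ of Lemma \ref{lemma402} depends only on $b=\beta-\gamma$, not on $a(x)$, so the two auxiliary problems satisfy $\lambda_p<0$ for \emph{every} $\varepsilon$ with no continuity argument needed. Also, as in the paper, upgrading the Step 1 energy estimate from $L^2$ to $L^\infty$ decay requires either the variation-of-constants bootstrap of Theorem \ref{theorem302} or a direct citation of the known decay result for the linear nonlocal Dirichlet problem.
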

\begin{proof}
Let $V(x,t)=\tilde{S}(x,t)+\tilde{I}(x,t)$. Then, according to problem \eqref{101}, we have
\begin{equation}\label{408}
\begin{cases}
\frac{\partial V(x,t)}{\partial t}=d\int_{\mathbb{R}^N}J(x-y)(V(y,t)-V(x,t))dy+\Lambda(x),~&x\in\Omega, t>0,\\
V(x,0)=V_0(x)\ge0,~&x\in\Omega,\\
V(x,t)=0,~&x\in\mathbb{R}^N\backslash\Omega, t>0.
\end{cases}
\end{equation}
Moreover, $\tilde{I}(x,t)$ satisfies
\begin{equation}\label{409}
\frac{\partial I(x,t)}{\partial t}=d\int_{\mathbb{R}^N}J(x-y)(I(y,t)-I(x,t))dy
+(\beta(x)-\gamma(x))I
-\frac{\beta(x)}{V}I^2
\end{equation}
for $x\in\Omega$ and $t>0$. Now, set $H(x,t)=V(x,t)-S_*(x)$. Then, $H(x,t)$ satisfies
\begin{equation*}
\begin{cases}
\frac{\partial H(x,t)}{\partial t}=d\int_{\mathbb{R}^N}J(x-y)(H(y,t)-H(x,t))dy,~& x\in\Omega, ~t>0,\\
H(x,t)=0,~& x\in\mathbb{R}^N\backslash\Omega, ~t>0
\end{cases}
\end{equation*}
with $H(x,0)=(V_0(x)-S_*(x))\in C(\bar{\Omega})$. Following Theorem 2.5 from \cite{AMRT2010}, there is
\begin{equation*}
\|H(\cdot,t)\|_{L^\infty(\Omega)}\le Ce^{-\lambda_1t}
\end{equation*}
for some positive constant $C$ and $\lambda_1$
is defined as in Lemma \ref{lemma302}. Since $H(\cdot,0)\in C(\bar{\Omega})$, we know $H(\cdot,t)\in C(\bar{\Omega})$. Thus,
\begin{equation*}
\|H(\cdot,t)\|_{C(\bar{\Omega})}\le C_*e^{-\lambda_1t}
\end{equation*}
for some positive constant $C_*$. Hence, we have $V(x,t)\to S_*(x)$ uniformly on $\bar{\Omega}$ as $t\to+\infty$. For any $\varepsilon>0$, there exists some $T>0$ such that
\begin{equation*}
S_*(x)-\varepsilon\le V(x,t)\le S_*(x)+\varepsilon
\end{equation*}
for all $t\ge T$. Below, we consider the following two auxiliary problems
\begin{equation}\label{410}
\begin{cases}
\frac{\partial\overline{I}}{\partial t}=d\left(\int_{\Omega}J(x-y)\overline{I}(y,t)dy-\overline{I}(x,t)\right)
+(\beta(x)-\gamma(x))\overline{I}
-\frac{\beta(x)\overline{I}^2}{S_*(x)+\varepsilon},~&x\in\Omega, t>T,\\
\overline{I}(x,t)=I(x,T)>0, ~&x\in\Omega
\end{cases}
\end{equation}
and
\begin{equation}\label{411}
\begin{cases}
\frac{\partial\underline{I}}{\partial t}=d\left(\int_{\Omega}J(x-y)\underline{I}(y,t)dy-\underline{I}(x,t)\right)
+(\beta(x)-\gamma(x))\underline{I}
-\frac{\beta(x)\underline{I}^2}{S_*(x)-\varepsilon},~&x\in\Omega, t>T,\\
\underline{I}(x,t)=I(x,T)>0, ~&x\in\Omega.
\end{cases}
\end{equation}
By comparison principle, we get
\begin{equation*}
\underline{I}(x,t)\le \tilde{I}(x,t)\le \overline{I}(x,t),~~~x\in\Omega,~t>T.
\end{equation*}
Now, using Lemma \ref{lemma402}, we have
\begin{equation*}
\underline{I}(x,t)\to\underline{I}_\varepsilon(x)~\text{and}~
\overline{I}(x,t)\to\overline{I}_\varepsilon(x)~~\text{uniformly on}~\bar{\Omega}~\text{as}~t\to+\infty,
\end{equation*}
where $\overline{I}_\varepsilon(x)$ and $\underline{I}_\varepsilon(x)$ are positive stationary solutions of \eqref{410} and \eqref{411}, respectively. Moreover, we claim that $\overline{I}_\varepsilon(x)$ and $\underline{I}_\varepsilon(x)$ are both monotone with respect to $\varepsilon$. Indeed, for any $\varepsilon_1, \varepsilon_2$ with $\varepsilon_1>\varepsilon_2$ and $S_*(x)>\max\{\varepsilon_1, \varepsilon_2\}$, letting $\underline{I}_{\varepsilon_1}(x)$ and $\underline{I}_{\varepsilon_2}(x)$ are respectively the positive stationary solutions of \eqref{411} corresponding to $\varepsilon=\varepsilon_1$ and $\varepsilon=\varepsilon_2$. Note that $\underline{I}_{\varepsilon_1}(x)$ satisfies
\begin{eqnarray*}
&& d\left(\int_{\Omega}J(x-y)\underline{I}_{\varepsilon_1}(y)dy
-\underline{I}_{\varepsilon_1}(x)\right)
+(\beta(x)-\gamma(x))\underline{I}_{\varepsilon_1}
-\frac{\beta(x)\underline{I}_{\varepsilon_1}^{2}}{S_*(x)-\varepsilon_2}\\
&=& \frac{\beta(x)\underline{I}_{\varepsilon_1}^{2}}
{S_*(x)-\varepsilon_1} -\frac{\beta(x)\underline{I}_{\varepsilon_1}^{2}}
{S_*(x)-\varepsilon_2}>0.
\end{eqnarray*}
On the other hand, it is easy to see that $MS_*(x)$ is a super solution of problem
\begin{equation}\label{4401}
d\left(\int_{\Omega}J(x-y)u(y)dy-u(x)\right)+(\beta(x)-\gamma(x))u(x)
-\frac{\beta(x)u^2(x)}{S_*(x)-\varepsilon_2}=0~~\text{in}~\Omega,
\end{equation}
provided the constant $M$ large enough. Here, $M$ dose not depend on $\varepsilon$. Meanwhile, the positive solution of \eqref{4401} has a positive lower bound by the same proof method in Theorem \ref{theorem401}, denoted by $\delta_0$. Then, by the uniqueness of positive stationary solutions of \eqref{411} for each fixed $\varepsilon$, we have
\begin{equation*}
\delta_0\le\underline{I}_{\varepsilon_1}(x)<\underline{I}_{\varepsilon_2}(x)\le MS_*(x).
\end{equation*}
That is, $\underline{I}_{\varepsilon}(x)$ is strictly decreasing and uniformly bounded on $\varepsilon$. Meanwhile, the same arguments can obtain that $\overline{I}_{\varepsilon}(x)$ is strictly increasing and uniformly bounded on $\varepsilon$. Thus, there exists a sequence $\{\varepsilon_n\}$ satisfying $\varepsilon_n\to0$ as $n\to+\infty$ such that
\begin{equation*}
\underline{I}_{\varepsilon_n}(x)\to I_1(x)~\text{and}~\overline{I}_{\varepsilon_n}(x)\to I_2(x)~\text{uniformly on}~\bar{\Omega}~\text{as}~n\to+\infty
\end{equation*}
for some positive functions $I_i(x)$ $(i=1,2)$ which are the positive solutions of
\begin{equation}\label{412}
d\left(\int_{\Omega}J(x-y)v(y)dy-v(x)\right)+(\beta(x)-\gamma(x))v(x)
-\frac{\beta(x)v^2(x)}{S_*(x)}=0~~\text{in}~\Omega.
\end{equation}
Consequently, the uniqueness of positive solutions of \eqref{412} implies that $I_1(x)=I_2(x):=I^*(x)$. Then, we get $\tilde{I}(x,t)\to I^*(x)$ uniformly on $\bar{\Omega}$ as $t\to+\infty$. Additionally, it is obvious that
$S_*(x)=S^*(x)+I^*(x)$. Thus, we have $\tilde{S}(x,t)\to S^*(x)$
uniformly on $\bar{\Omega}$ as $t\to+\infty$. The  proof is complete.
\end{proof}

\begin{remark}{\rm
In Theorem \ref{theorem403}, we assume $d_S=d_I$. It is pointed out that if $d_S\neq d_I$, the case becomes very complicate and we leave it for the further study.
}
\end{remark}
\section{Discussion}

\noindent

This paper is concerned with a nonlocal dispersal SIS epidemic model. We have discussed the existence, uniqueness and stability of the disease-free equilibrium and the endemic equilibrium.
From a biological point of view, Theorem \ref{theorem302} implies that the disease will be extinct as $R_0<1$, and Theorem \ref{theorem403} implies that the disease persists when $R_0>1$.

Additionally, in biological sense, we know that $x$ is a low-risk site if the disease transmission rate $\beta(x)$ is lower than the local recovery $\gamma(x)$ and is a high-risk site if $\beta(x)>\gamma(x)$. Meanwhile, $\Omega$ is a low-risk domain if $\int_{\Omega}\beta(x)dx<\int_{\Omega}\gamma(x)dx$ and is a high-risk domain if $\int_{\Omega}\beta(x)dx\ge\int_{\Omega}\gamma(x)dx$, see \cite{ABL-2008}.
Seen from Corollary \ref{cor2}, the disease will persist as long as there are some high-risk site and the movements of the infected individuals are slow, but the disease will be extinct if the movements of the infected individuals are quick. That is to say, in this case, the nonlocal dispersal of the infected individuals will speed up the extinction of the disease. Meanwhile, if the habitat of the population is filled with the low-risk sites, then the disease will be extinct no matter what the type of the dispersal of the population is. Particularly, the
disease may be extinct as long as the movements of the infected individuals are fast even though the species live in a high-risk domain. This implies that the nonlocal dispersal of the infected individuals may suppress the spread of the disease in a high-risk domain. We hope that these results will be useful for the disease control.

\section*{Acknowledgments}

\noindent

Fei-Ying Yang was partially supported by NSF of China (11401277) and Wan-Tong Li was
partially supported by NSF of China (11271172).

\end{document}